\newtheorem{thm}{Theorem}[section]
\newtheorem{prop}[thm]{Proposition}
\newtheorem{df}[thm]{Definition}
\newtheorem{lem}[thm]{Lemma}
\newtheorem{cor}[thm]{Corollary}
\newtheorem{ex}[thm]{Example}
\newtheorem{rem}[thm]{Remark}
\def\N{\mathbb{N}}
\def\R{\mathbb{R}}
\def\M{\mathcal{M}_{\mathbb{P}}(\Omega \times X)}
\def\N{\mathbb{N}}
\def\N{\mathbb{N}}
\def\R{\mathbb{R}}
\def\C{\mathcal{C}}
\def\S {S_{n\tau}\varphi({\omega_{\C_{[0,n)}(x)}})}
\def\M {S_{n_i\tau}\varphi({\omega_{\C_{[0,n_i)}(x_i)}})}
\numberwithin{equation}{section}
\begin{document}

\title{Bowen's  equations for invariance pressure of control systems}

\author{Rui Yang$^{1,2,3}$, Ercai Chen$^{1}$, Jiao Yang$^{1}$ and Xiaoyao Zhou$^{1}$*
}
\address
{1.School of Mathematical Sciences and Institute of Mathematics, Ministry of Education Key Laboratory of
	NSLSCS, Nanjing Normal University, Nanjing 210023, Jiangsu, P.R.China}

\address
{2.College of Mathematics and Statistics, Chongqing University, Chongqing 401331, P.R.China}
\address
{3. Key Laboratory of Nonlinear Analysis and its Applications (Chongqing University), 
Ministry of Education}

\email{zkyangrui2015@163.com}
\email{ecchen@njnu.edu.cn}
\email{jiaoyang6667@126.com} 
\email{zhouxiaoyaodeyouxian@126.com}
\date{}

\begin{abstract}
 We aim to establish Bowen's equations for upper capacity invariance pressure   and  Pesin-Pitskel invariance pressure of discrete-time control systems. We first  introduce  a new invariance pressure called induced invariance pressure  on partitions that  specializes the  upper capacity invariance pressure on partitions,  and  then   show  that   the  two types of invariance pressures  are related by a Bowen's equation.  Besides,  to establish Bowen's equation for  Pesin-Pitskel invariance pressure on partitions  we also  introduce a new notion called BS invariance dimension on subsets.  Moreover,  a variational principle for BS  invariance dimension on subsets is established.
\end{abstract}
\keywords{Control systems; Bowen's equations; Invariance pressure-like quantities; BS invariance dimension; Variational principle;}
\subjclass[2000]{37A35, 37B40, 93C10, 93C55}
\renewcommand{\thefootnote}{}
\footnote{*corresponding author}
\renewcommand{\thefootnote}{\arabic{footnote}}
\maketitle
\section{Introduction}
Entropy  is  well-known as an   important  quantity  for   characterizing the topological complexity of dynamical systems. In the context of classical dynamical systems,  Adler,  Konheim  and   McAndrew   \cite{akm65}  introduced  topological entropy by  using open covers.    Bowen \cite{b71}    formulated the equivalent  definitions for topological entropy  by using spanning sets and separated sets.  In 1973,  Bowen  \cite{b73} defined the topological entropy on subsets  resembling  the definition of Hausdorff dimension, which is called Bowen topological entropy later.   By modifying    Carath\'eodory's construction,  which  we call  Carath\'eodory-Pesin  structures,  Pesin   in  his monograph axiomatically \cite{p97} introduced the notions of    dimensions and   capacities on subsets without involving dynamics. Based on  this structure, Pesin and Pitskel \cite{pp84}  extended Bowen topological entropy to  topological pressure  on subsets, and established variational principle for  some certain subsets in terms of measure-theoretic entropy. Employing a geometric measure theory approach,    Feng and Huang  \cite{fh12} formulated  variational principle for Bowen  topological   entropy of compact subset in terms of measure-theoretic  lower  Brin-Katok local entropy of Borel probability measure \cite{bk83}.   An extension of   the result,  presented by   Wang and Chen  \cite{wc12},  shows  Feng-Huang's variational principle is still valid for   BS dimension \cite{bs00}.

Parallel to the classical entropy theory,     in the framework of control  systems, entropy   is  more  closely  associated with  the realization of   certain control  tasks.  Analogous to the  definition of topological entropy,   Nair et al. \cite{NM04}  firstly  introduced  topological feedback entropy by using  open covers,  which  gives  a  characterization for  the minimal data  rate of certain control tasks. Besides,  Colonius and Kawan \cite{ck} gave the notion of invariance  entropy   to  measure the exponential growth rate of the minimal number of control  functions necessary to keep a subset of  a  controlled invariant set invariant.   It turns out that   the  two different  invariance entropies coincide up to the technical assumptions \cite{ckn13}.   As a natural generalization of topological entropy,  topological pressure  \cite{w82}  plays a vital role  in thermodynamic formalism, and  is a powerful tool  for studying  dimension theory  as well as other  fields of dynamical systems.  In 2019,  Colonius et al. \cite{fsc19} defined   invariance pressure for discrete control systems. It  measures  the exponential growth rate of  the  total weighted information produced by the control functions  such that  the  trajectories  of systems remain in a controlled invariant set.  The work in   \cite{hz18,zh19} suggests  that invariance  pressure-like quantities  also  enjoy  the   dimensional characterizations by invoking the  Carath\'eodory-Pesin  structures.  It is reasonable to ask how to  inject ergodic theoretic ideas into  invariance entropy theory of control systems by establishing some proper   variational principles for invariance  pressure-like quantities.    Colonius  et al.  \cite{ff18,f18} introduced metric invariance entropy with respect to conditionally invariant measures and obtained partial variational principle.  Later,  inspired by  the  Feng-Huang's work \cite{fh12},  Wang, Huang and Sun \cite{whs19}  derived  an analogous   variational principle for Bowen   invariance entropy  on  compact subsets. See also   \cite{z20,nwh22,wh22}  for more advances about the variational principles   of invariance pressure-like quantities.

The present  paper aims to  introduce some  new invariance pressure-like quantities,  and  establish the bridges between the new  quantities  and those old ones given in \cite{fsc19,whs19,zh19,z20}  by some Bowen's equations. Before that,  mentioning some  backgrounds of  the well-known Bowen's equations in classical dynamical systems  is useful for stating our main results.   In 1979, Bowen \cite{b79}   found  that the Hausdorff dimension of a quasi-circle is given by the unique zero of the pressure function of a geometric potential. This means that the Hausdorff dimension of  certain compact  sets  can be computed by figuring out  the root of the corresponding equation, rather than by  its  definition. Since then,  the equation involving the unique zero of certain pressure function was  customarily called Bowen's equation  in a unified  way.  In 2000,   Barreira and Schmeling  \cite{bs00}  proved that   BS dimension is  the unique root of the equation defined by topological pressure  of additive potential function.  By   extending the concept of  induced topological pressure, introduced by  Jaerisch et al. \cite{jms14} in symbolic systems,  to  general topological dynamical systems,  Xing  and Chen \cite{xc}    showed that   the induced topological pressure is exactly the unique root  of the   equation defined by   classical topological pressure.  More results of Bowen's equations and dimensions  can be  found in  \cite{c11,ycz22,xm23, bm221}. See also  Ruelle \cite{rue82}, Mihailescu  et. al. \cite{mm83,mu04, mu10,ms13} and the references
therein  for the  applications  of  Bowen's equations   in  dimension theory.   The aforementioned work  not only suggests  that Bowen's  equations provide  a bridge between  thermodynamic formalism and  dimension theory of dynamical systems,   but also exhibits  an  approximate estimate  for  the geometric dimension of the certain sets.   This motivates us to  consider   the so-called   Bowen's equations  for  invariance pressure-like quantities of  control systems. 

It  is  difficult to give an equivalent definition  for invariance pressure in control systems   using separated  sets as it was done for topological pressure. So  we define some new invariance pressures of  controlled  invariant sets on  invariant partitions  so that the Bowen's equation  is available.   Inspired by  the  ideas of \cite{ jms14, xc,ycz22},  we   first define   induced  invariance pressure on partitions by spanning sets and separated sets, and then prove that the induced  invariance pressure   on partitions  can be  characterized by a certain  dimensional  structure and  that it is the unique root of the equation  defined by upper capacity invariance pressure on partitions  introduced  by Zhong and Huang   \cite{zh19}.    

\begin{thm}\label{thm 1.1.1}
Let  $\varSigma=(\mathbb{N},X,U,\mathcal{U},\phi)$ be a discrete-time control system. Let $Q$ be a  controlled  invariant set and  $\mathcal C =(\mathcal A, \tau,\nu)$ be an invariant partition of  $Q$, and let $\varphi,\psi \in C(U,\mathbb{R})$ with $\psi>0$. 
Then
\begin{align*}
P_{inv,\psi }(\varphi,Q,\C)=\inf\{\beta \in \mathbb{R}: \limsup_{T \to \infty}R_{inv,\psi, T}(\varphi-\beta\psi,Q,\C)<\infty\},
\end{align*}
where $P_{inv,\psi }(\varphi,Q,\C)$  denotes   $\psi$-induced  invariance pressure  of $\varphi$ on $Q$ w.r.t. $\mathcal{C}$.
\end{thm}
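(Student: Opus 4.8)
\emph{Outline of the proof.} Put
\begin{align*}
B:=\inf\bigl\{\beta\in\R:\ \limsup_{T\to\infty}R_{inv,\psi,T}(\varphi-\beta\psi,Q,\C)<\infty\bigr\},
\end{align*}
so that the assertion is $P_{inv,\psi}(\varphi,Q,\C)=B$; I would prove this by the two inequalities $P_{inv,\psi}(\varphi,Q,\C)\le B$ and $P_{inv,\psi}(\varphi,Q,\C)\ge B$. First I would record the order structure of the set defining $B$. The class of competing families at level $T$ in the $\psi$-induced construction does not depend on $\beta$ (the first-passage rule refers to $\psi$, which is fixed), only the weights do; and since $\psi>0$, for $\beta_{1}<\beta_{2}$ we have $\varphi-\beta_{2}\psi<\varphi-\beta_{1}\psi$ pointwise, so each weight $e^{S_{\cdot}(\varphi-\beta\psi)(\cdot)}=e^{S_{\cdot}\varphi(\cdot)}e^{-\beta S_{\cdot}\psi(\cdot)}$ is non-increasing in $\beta$. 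Hence $\beta\mapsto\limsup_{T\to\infty}R_{inv,\psi,T}(\varphi-\beta\psi,Q,\C)$ is non-increasing and the set in the definition of $B$ is upward closed, so that $B$ is a genuine threshold. Positivity of $\psi$ is essential here; together with continuity of $\psi$ on the (compact) control range it also furnishes a constant $K$ with $0<K^{-1}\le\psi\le K<\infty$, used below.

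The heart of the matter is a uniform rescaling estimate. By construction, every finite control/orbit segment (``$\psi$-word'') $w$ that is counted in $R_{inv,\psi,T}(\cdot,Q,\C)$ has its $\psi$-Birkhoff sum $S_{\cdot}\psi(w)$ confined to an interval of length at most $K$ about $T$ --- this is exactly the first-passage rule of the $\psi$-induced construction, and it uses that $\psi$ is bounded and positive. Consequently, for any such $w$,
\begin{align*}
e^{S_{\cdot}(\varphi-\beta\psi)(w)}=e^{-\beta S_{\cdot}\psi(w)}\,e^{S_{\cdot}\varphi(w)}\qquad\text{with}\qquad e^{-\beta S_{\cdot}\psi(w)}\in\bigl[e^{-\beta T-|\beta|K},\ e^{-\beta T+|\beta|K}\bigr],
\end{align*}
and the bracketing factors depend on neither $w$ nor the family containing it. Summing over the members of any competing family and then passing to the infimum, supremum and any auxiliary limits in the definition of $R_{inv,\psi,T}$ --- over all of which the same factor is uniform --- one obtains a constant $c=c(\beta,\psi)\ge 1$, independent of $T$, with
\begin{align*}
c^{-1}e^{-\beta T}R_{inv,\psi,T}(\varphi,Q,\C)\ \le\ R_{inv,\psi,T}(\varphi-\beta\psi,Q,\C)\ \le\ c\,e^{-\beta T}R_{inv,\psi,T}(\varphi,Q,\C).
\end{align*}
Thus the exponential growth rate in $T$ of $R_{inv,\psi,T}(\varphi-\beta\psi,Q,\C)$ is that of $R_{inv,\psi,T}(\varphi,Q,\C)$ shifted down by $\beta$; equivalently, $P_{inv,\psi}(\varphi-\beta\psi,Q,\C)=P_{inv,\psi}(\varphi,Q,\C)-\beta$.

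From here the theorem is read off. By the definition of $P_{inv,\psi}$ (or by its spanning/separated-set description established before the theorem), $P_{inv,\psi}(\varphi,Q,\C)=\limsup_{T\to\infty}\frac1T\log R_{inv,\psi,T}(\varphi,Q,\C)$. If $\beta>P_{inv,\psi}(\varphi,Q,\C)$, pick $\beta'$ with $P_{inv,\psi}(\varphi,Q,\C)<\beta'<\beta$; then $R_{inv,\psi,T}(\varphi,Q,\C)\le e^{\beta'T}$ for all large $T$, so $R_{inv,\psi,T}(\varphi-\beta\psi,Q,\C)\le c\,e^{(\beta'-\beta)T}\to 0$ by the estimate, whence $\limsup_{T}R_{inv,\psi,T}(\varphi-\beta\psi,Q,\C)<\infty$ and $\beta\ge B$; letting $\beta\downarrow P_{inv,\psi}(\varphi,Q,\C)$ gives $P_{inv,\psi}(\varphi,Q,\C)\ge B$. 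Conversely, if $\beta<P_{inv,\psi}(\varphi,Q,\C)$ there are $\beta''>\beta$ and $T_{k}\to\infty$ with $R_{inv,\psi,T_{k}}(\varphi,Q,\C)\ge e^{\beta''T_{k}}$, so $R_{inv,\psi,T_{k}}(\varphi-\beta\psi,Q,\C)\ge c^{-1}e^{(\beta''-\beta)T_{k}}\to\infty$; hence $\limsup_{T}R_{inv,\psi,T}(\varphi-\beta\psi,Q,\C)=\infty$, the number $\beta$ fails to lie in the set defining $B$, and $B\ge P_{inv,\psi}(\varphi,Q,\C)$. The two inequalities together give the equality.

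The step I expect to be the main obstacle is making the rescaling estimate genuinely uniform: one has to verify that the $\psi$-induced construction really does force every $\psi$-word counted in $R_{inv,\psi,T}$ into a window of length $\le K$ about $T$, and that the comparison survives every infimum, supremum and auxiliary limit in the definition of $R_{inv,\psi,T}$ \emph{before} any of them are taken. If the definition in force is that $R_{inv,\psi,T}$ is a Carath\'eodory--Pesin covering sum rather than a spanning-set partition function, one must in addition bound arbitrary covering competitors --- built from $\psi$-words of possibly large and varying level --- from below by the corresponding spanning-set quantity, and this is where a Bowen-type telescoping/scale-comparison argument enters. Everything else is the standard monotonicity-and-threshold bookkeeping familiar from Bowen's equation.
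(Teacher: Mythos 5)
Your argument hinges on the claim that every word counted in $R_{inv,\psi,T}$ has its $\psi$-Birkhoff sum confined to a window of bounded length about $T\tau$, which would make $e^{-\beta S_{n\tau}\psi(\cdot)}$ comparable, up to a $T$-independent constant, to $e^{-\beta T\tau}$. That claim is false for the quantity in the theorem. In the paper, the windowed object is $P_{inv,\psi,T}$, built from the sets $X_n$ (first-passage time $n$: $S_{n\tau}\psi\le T\tau$ and $S_{(n+1)\tau}\psi>T\tau$), and for those words one indeed has $(T-\|\psi\|)\tau<S_{n\tau}\psi\le T\tau$. But $R_{inv,\psi,T}$ is built from the sets $Y_n=\{x:S_{n\tau}\psi(\omega_{\C_{[0,n)}(x)})>T\tau\}$ and the time set $G_T$ of \emph{all} $n$ for which some $x$ has $S_{n\tau}\psi>T\tau$; since $\psi>0$ forces $S_{n\tau}\psi\ge nm\tau\to\infty$, the set $G_T$ contains every sufficiently large $n$ and is infinite, and the Birkhoff sums in the summand are unbounded above. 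Thus $R_{inv,\psi,T}$ is a Carath\'eodory--Pesin type tail sum, not a partition function concentrated near level $T$. Your proposed bracketing
\begin{align*}
c^{-1}e^{-\beta T}R_{inv,\psi,T}(\varphi,Q,\C)\ \le\ R_{inv,\psi,T}(\varphi-\beta\psi,Q,\C)\ \le\ c\,e^{-\beta T}R_{inv,\psi,T}(\varphi,Q,\C)
\end{align*}
fails already in the basic case $\psi\equiv1$, $\varphi\equiv0$: then $R_{inv,1,T}(0,Q,\C)=\sum_{n>T}s(\C,n,Q)$, which is $+\infty$ whenever the invariance entropy is positive, while $R_{inv,1,T}(-\beta,Q,\C)=\sum_{n>T}s(\C,n,Q)e^{-\beta n\tau}$ is finite for $\beta$ large; the lower bound in your estimate would then force $R_{inv,1,T}(-\beta,Q,\C)=\infty$ for all $\beta$, contradicting the existence of a finite threshold $B$.

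Because of this, the remaining threshold bookkeeping does not go through. The paper's proof replaces the continuous $\psi$-sum by the discretization $m_n(x)$ (so $(m_n(x)-1)\|\psi\|\tau<S_{n\tau}\psi\le m_n(x)\|\psi\|\tau$) and then handles the two inequalities separately. For $\mathrm{lhs}\le\mathrm{rhs}$ it extracts a sparse sequence $T_j$ (so the first-passage time sets $S_{T_j}$ are pairwise disjoint) along which $P_{inv,\psi,T_j}$ nearly realizes the $\limsup$; these windowed contributions are sub-sums of the tail sum, and summing over $j$ forces divergence. For $\mathrm{lhs}\ge\mathrm{rhs}$ it partitions the tail sum $R_{inv,\psi,T}$ by first-passage level $lm$ (showing each $x\in Y_n$ lands in exactly one $X_n$ at some level $lm$), uses the a priori bound on $P_{inv,\psi,lm}$, and sums a geometric series in $l$. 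This decomposition-by-level step is precisely what is missing from your outline, and it cannot be replaced by a single uniform rescaling.
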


\begin{thm}\label{thm 1.1}
Let  $\varSigma=(\mathbb{N},X,U,\mathcal{U},\phi)$ be a discrete-time control system.  Let $Q$ be a  controlled  invariant set and  $\mathcal C =(\mathcal A, \tau,\nu)$ be an invariant partition of  $Q$, and let $\varphi,\psi \in C(U,\mathbb{R})$ with $\psi>0$. Then  $P_{inv,\psi }(\varphi,Q,\C)$ is the unique root  of  the equation $$P_{inv}(\varphi-\beta \psi,Q,\mathcal{C})=0,$$
where $P_{inv}(\varphi, Q,\mathcal{C})$ denotes  the upper capacity invariance pressure of $\varphi$ on $Q$ w.r.t. $\mathcal{C}$. 

\end{thm}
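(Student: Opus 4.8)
The plan is to use Theorem \ref{thm 1.1.1} to trade the equation for a statement about the auxiliary quantities $R_{inv,\psi,T}$, after first checking that the equation has a unique root. Throughout put $g(\beta):=P_{inv}(\varphi-\beta\psi,Q,\mathcal{C})$ and $c_-:=\min_{U}\psi>0$, $c_+:=\max_{U}\psi<\infty$, finite and positive since $U$ is compact and $\psi\in C(U,\mathbb{R})$ with $\psi>0$. First I would record that for $\beta_1<\beta_2$ one has, pointwise on $U$, $(\varphi-\beta_1\psi)-(\beta_2-\beta_1)c_+\le \varphi-\beta_2\psi\le(\varphi-\beta_1\psi)-(\beta_2-\beta_1)c_-$, so that the monotonicity of $P_{inv}(\cdot,Q,\mathcal{C})$ in the potential together with $P_{inv}(\gamma+a,Q,\mathcal{C})=P_{inv}(\gamma,Q,\mathcal{C})+a$ for constants $a$ (standard properties from \cite{zh19}) gives
\begin{align*}
g(\beta_1)-(\beta_2-\beta_1)c_+\le g(\beta_2)\le g(\beta_1)-(\beta_2-\beta_1)c_-.
\end{align*}
Hence $g$ is strictly decreasing and Lipschitz, and, since it is finite somewhere (here $P_{inv}(\varphi,Q,\mathcal{C})<\infty$), $g(\beta)\to-\infty$ as $\beta\to+\infty$ and $g(\beta)\to+\infty$ as $\beta\to-\infty$; so $g(\beta)=0$ has a unique root $\beta_0\in\mathbb{R}$. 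By Theorem \ref{thm 1.1.1} it then remains to prove $\beta_0=\inf\{\beta\in\mathbb{R}:\ \limsup_{T\to\infty}R_{inv,\psi,T}(\varphi-\beta\psi,Q,\mathcal{C})<\infty\}$.

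The heart of the proof will be a sign comparison: for every $\gamma\in C(U,\mathbb{R})$,
\begin{align*}
P_{inv}(\gamma,Q,\mathcal{C})<0\ \Longrightarrow\ \limsup_{T\to\infty}R_{inv,\psi,T}(\gamma,Q,\mathcal{C})<\infty,\\
P_{inv}(\gamma,Q,\mathcal{C})>0\ \Longrightarrow\ \limsup_{T\to\infty}R_{inv,\psi,T}(\gamma,Q,\mathcal{C})=\infty.
\end{align*}
To prove this I would write $Z_n(\gamma)$ for the time-$n$ partition sum whose growth rate defines $P_{inv}(\gamma,Q,\mathcal{C})=\limsup_{n\to\infty}\frac{1}{n}\log Z_n(\gamma)$, and recall that $R_{inv,\psi,T}(\gamma,Q,\mathcal{C})$ is built, à la Pesin, from sums $\sum e^{S_{n\tau}\gamma}$ over $\mathcal{C}$-adapted spanning families of control words whose $\psi$-weighted length $S_{n\tau}\psi$ exceeds $T$, where always $nc_-\le S_{n\tau}\psi\le nc_+$. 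For the first implication, if $-2a:=P_{inv}(\gamma,Q,\mathcal{C})<0$ pick $N$ with $Z_n(\gamma)\le e^{-an}$ for $n\ge N$; a word contributing at level $T$ has combinatorial length $n\ge T/c_+$, so after absorbing the finitely many short words the sum is dominated by a geometric tail $\sum_{n\ge T/c_+}e^{-an}$, which is bounded uniformly in $T$. For the second implication, take a subsequence $(n_i)$ with $\frac{1}{n_i}\log Z_{n_i}(\gamma)\to P_{inv}(\gamma,Q,\mathcal{C})$; the mass $Z_{n_i}(\gamma)=e^{n_iP_{inv}(\gamma,Q,\mathcal{C})+o(n_i)}$ is spread over control words with $\psi$-weighted lengths inside $[n_ic_-,n_ic_+]$, so by pigeonhole $R_{inv,\psi,T}(\gamma,Q,\mathcal{C})$ is exponentially large for some $T$ in that range, forcing $\limsup_{T}R_{inv,\psi,T}(\gamma,Q,\mathcal{C})=\infty$. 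Making the range of $T$ precise and controlling how the real-valued, orbit-dependent quantities $S_{n\tau}\psi$ distribute over such windows — using uniform continuity of $\psi$ and the bounds $nc_-\le S_{n\tau}\psi\le nc_+$ — is the one genuinely delicate point, and I expect it to mirror the estimates already carried out for Theorem \ref{thm 1.1.1}.

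Finally I would assemble the pieces by applying the lemma with $\gamma=\varphi-\beta\psi$: if $\beta>\beta_0$ then $g(\beta)<g(\beta_0)=0$, hence $\limsup_{T}R_{inv,\psi,T}(\varphi-\beta\psi,Q,\mathcal{C})<\infty$ and $\beta$ lies in the set; if $\beta<\beta_0$ then $g(\beta)>0$, hence $\limsup_{T}R_{inv,\psi,T}(\varphi-\beta\psi,Q,\mathcal{C})=\infty$ and $\beta$ does not. So the set contains $(\beta_0,\infty)$ and is disjoint from $(-\infty,\beta_0)$, its infimum is $\beta_0$, and by Theorem \ref{thm 1.1.1} $P_{inv,\psi}(\varphi,Q,\mathcal{C})=\beta_0$, the unique root of $P_{inv}(\varphi-\beta\psi,Q,\mathcal{C})=0$. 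The main obstacle is the sign comparison lemma; the monotonicity bookkeeping and the final assembly are routine once the properties of $P_{inv}(\cdot,Q,\mathcal{C})$ and Theorem \ref{thm 1.1.1} are available.
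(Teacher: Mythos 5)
Your proposal is correct and follows essentially the same route as the paper: establish that $\beta\mapsto P_{inv}(\varphi-\beta\psi,Q,\mathcal{C})$ is Lipschitz and strictly decreasing (hence has a unique root), then use Theorem \ref{thm 1.1.1} plus a two-way sign comparison between $P_{inv}(\gamma,Q,\mathcal{C})$ and the finiteness of $\limsup_{T}R_{inv,\psi,T}(\gamma,Q,\mathcal{C})$ to identify that root with $P_{inv,\psi}(\varphi,Q,\mathcal{C})$. Two small remarks: for Step 1 you should note $g$ is bounded below as well as above (e.g.\ $-\|\varphi\|\le P_{inv}(\varphi,Q,\mathcal{C})$, which follows since the defining sup is over nonempty separated sets), and your ``pigeonhole'' step is an unnecessary detour — the direct observation used in the paper is cleaner: for $n$ large and any $T<n\,c_-$ one automatically has $n\in G_T$ and $Y_n=Q$, so the full time-$n$ separated-set sum is already a summand of $R_{inv,\psi,T}$, yielding $R_{inv,\psi,T}\ge m(\gamma,Q,n,\mathcal{C})$ and hence the desired blow-up when $P_{inv}(\gamma,Q,\mathcal{C})>0$.
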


Since invariance pressure can be  treated as dimension \cite{zh19,z20},   this motivates us  to  establish the Bowen's  equation for Pesin-Pitskel invariance pressure. To this end,  a new    dimension,  called BS invariance dimension defined by  Carath\'eodory-Pesin  structures,  is introduced  to obtain the precise root  of the  equation defined by  Pesin-Pitskel invariance pressure.   Moreover,  we establish an analogous Feng-Huang's variational principle for BS  invariance dimension  on subsets of partitions to link measure theory and invariance entropy  theory of control systems.   

\begin{thm}\label{thm 1.2} 
Let  $\varSigma=(\mathbb{N},X,U,\mathcal{U},\phi)$ be a discrete-time control system. Let $Q$ be a   controlled  invariant set and  $\mathcal C =(\mathcal A, \tau,\nu)$ be an  invariant partition of  $Q$, and let  $Z$ be a  non-empty subset of $Q$ and $\varphi\in C(U,\mathbb{R})$ with $\varphi >0$. Then  ${\rm dim}_{\C}^{BS}(\varphi,Z,Q)$ is the unique root of  the  equation $$P_{\C}(-t\varphi,Z,Q)=0,$$  where  $P_{\C}(\varphi,Z,Q)$ denotes Pesin-Pitskel invariance pressure  of $\varphi$ on $Z$ w.r.t. $\C$, and ${\rm dim}_{\C}^{BS}(\varphi,Z,Q)$ is  BS invariance  dimension   of $\varphi$ on $Z$ w.r.t. $\C$.
\end{thm}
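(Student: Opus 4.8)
The plan is to follow the standard "Bowen equation" template adapted to the Carathéodory–Pesin structure underlying Pesin–Pitskel invariance pressure on partitions. Write $F(t)=P_{\C}(-t\varphi,Z,Q)$ for $t\in\mathbb{R}$. The argument splits into three parts: (i) $F$ is non-increasing and, in fact, strictly decreasing in $t$ wherever it is finite, so the equation $F(t)=0$ has at most one root; (ii) the root, if it exists, equals ${\rm dim}_{\C}^{BS}(\varphi,Z,Q)$; (iii) a root actually exists. Since $\varphi>0$ on the compact set $U$, put $a=\min_U\varphi>0$ and $b=\max_U\varphi<\infty$; these uniform bounds are what make the whole scheme work, exactly as positivity of $\psi$ does in Theorems \ref{thm 1.1.1} and \ref{thm 1.1}.

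First I would record the monotonicity. Going back to the definitions of $P_{\C}(-t\varphi,Z,Q)$ and of ${\rm dim}_{\C}^{BS}(\varphi,Z,Q)$ in terms of covers of $Z$ by cylinders $\C_{[0,n)}(x)$ and the associated weighted sums, one compares, for $s<t$, the Carathéodory sum for $-s\varphi$ with that for $-t\varphi$: along any cylinder of "length" $n$ the Birkhoff-type sum $S_{n\tau}(t\varphi)$ exceeds $S_{n\tau}(s\varphi)$ by at least $(t-s)a\cdot n\tau_{\min}$ (using $\varphi\ge a$ and a lower bound on the time increments $\tau$), which forces $F(t)\le F(s)$, and moreover gives the quantitative gap $F(s)-F(t)\ge (t-s)a\,\lambda$ whenever $F$ is finite, for a suitable $\lambda>0$; hence $F$ is strictly decreasing on the region where it is finite and has at most one zero. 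The same comparison shows $F(t)=+\infty$ for $t$ very negative and, using the upper bound $\varphi\le b$, that $F(t)<+\infty$ for $t$ large; together with the fact that $F$ is a decreasing, real-valued-once-finite function one gets a unique $t_0$ with $F(t_0)=0$ (treating the boundary behaviour of the critical value carefully, as is standard for pressure functions of this type).

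Next I would show $t_0={\rm dim}_{\C}^{BS}(\varphi,Z,Q)$. This is the heart of the matter and is where the precise matching of the two Carathéodory–Pesin structures is used. One shows: if $t<t_0$ then $F(t)>0$, and by unwinding the definition of Pesin–Pitskel invariance pressure this means the corresponding Carathéodory measure of $Z$ at "parameter zero with weight $-t\varphi$" is infinite, which is exactly the statement that ${\rm dim}_{\C}^{BS}(\varphi,Z,Q)\ge t$; symmetrically, if $t>t_0$ then $F(t)<0$, the associated Carathéodory measure of $Z$ vanishes, and ${\rm dim}_{\C}^{BS}(\varphi,Z,Q)\le t$. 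Letting $t\uparrow t_0$ and $t\downarrow t_0$ pins the BS invariance dimension to $t_0$. The main obstacle I anticipate is precisely this translation: one must check that the dimensional quantity defining ${\rm dim}_{\C}^{BS}$ is, cover-by-cover, the threshold parameter of the family $\{P_{\C}(-t\varphi,Z,Q)\}_t$, i.e. that the "$\inf\{t:\ \text{Carathéodory sum}\to 0\}$" defining the dimension coincides with the sign-change location of $F$. This requires being careful that $Z$ need not be compact, so one works throughout with the same class of admissible covers by cylinders, keeps the mesh control via $\tau$ and $\varphi>0$, and invokes (if needed) the variational principle for BS invariance dimension established in the paper to handle the borderline estimate; the uniform bounds $0<a\le\varphi\le b$ guarantee all the relevant sums are comparable to $n$ up to constants, so no pathology occurs at the critical exponent. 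Once the two inequalities ${\rm dim}_{\C}^{BS}(\varphi,Z,Q)\ge t$ for $t<t_0$ and ${\rm dim}_{\C}^{BS}(\varphi,Z,Q)\le t$ for $t>t_0$ are in hand, uniqueness from step (i) finishes the proof.
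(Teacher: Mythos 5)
Your proposal is correct and follows essentially the same route as the paper: show that $\Phi(t)=P_{\C}(-t\varphi,Z,Q)$ is Lipschitz continuous and strictly decreasing (this is Proposition~\ref{prop 3.3}, applied with $-\varphi$ in place of its $\varphi$) to get existence and uniqueness of the root, and then identify that root with ${\rm dim}_{\C}^{BS}(\varphi,Z,Q)$ by matching the two Carath\'eodory--Pesin structures. The one point worth stressing is that the ``translation'' you anticipate as the main obstacle is in fact an immediate term-by-term identity, namely $R_{\C}(\varphi,Z,Q,\lambda,N)=M_{\C}(-\lambda\varphi,Z,Q,0,N)$ for every cover, so no mesh control and no appeal to the variational principle is needed (Theorem~\ref{thm 1.3} requires compact $Z$ and clopen $\C$, so it is not available for general $Z$ in any case), and note that $P_{\C}$ of a continuous potential is always finite, so $\Phi(t)$ is never $+\infty$.
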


One says that a   finite Borel partition  $\mathcal{A}$ of  the  metric space $Q$ is  \emph{clopen} if each element of $\mathcal{A}$ is   both an  open and  a closed Borel  subset of $Q$.

\begin{thm}\label{thm 1.3}
Let  $\varSigma=(\mathbb{N},X,U,\mathcal{U},\phi)$ be a discrete-time control system. Let $Q$ be a  controlled  invariant set and  $\mathcal C =(\mathcal A, \tau,\nu)$ be a clopen  invariant partition of  $Q$. Suppose that $K$ is a  non-empty compact set of $Q$ and $\varphi\in C(U,\mathbb{R})$ with $\varphi >0$.   Then
\begin{align*}
{\rm dim}_{\C}^{BS}(\varphi,K,Q)=\sup\{\underline{h}_{\mu,inv}(\varphi,Q,\C):\mu \in M(Q), \mu (K)=1\},
\end{align*}
where   $\underline{h}_{\mu,inv}(\varphi,Q,\C)$ is  the measure-theoretic  lower  BS  invariance pressure of $\mu$  w.r.t. $\C$ and $\varphi$. 
\end{thm}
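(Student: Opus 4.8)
The plan is to transport the geometric-measure-theoretic scheme of Feng--Huang \cite{fh12} (cf.\ also \cite{wc12,whs19}) to the Carath\'eodory--Pesin structure that defines ${\rm dim}_{\C}^{BS}$ and to the time-rescaled sums $S_{n\tau}\varphi$. Recall that $\underline{h}_{\mu,inv}(\varphi,Q,\C)=\int_{Q}\underline{h}_{\mu,inv}(\varphi,\C,x)\,d\mu(x)$ with $\underline{h}_{\mu,inv}(\varphi,\C,x)=\liminf_{n\to\infty}\frac{-\log\mu(\C_{[0,n)}(x))}{S_{n\tau}\varphi(\omega_{\C_{[0,n)}(x)})}$; since $\varphi>0$, the denominator tends to $\infty$ uniformly, at a linear rate in $n$, so bounded additive errors are negligible after dividing by it, a fact used repeatedly below. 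The proof splits into the two inequalities, and only the upper-bound direction will need the compactness of $K$ and the clopen hypothesis.

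For the lower bound, fix $\mu\in M(Q)$ with $\mu(K)=1$, set $s=\underline{h}_{\mu,inv}(\varphi,Q,\C)$, and take $0<t<s$. Then $K_{t}=\{x\in K:\underline{h}_{\mu,inv}(\varphi,\C,x)>t\}$ has $\mu(K_{t})>0$; passing to the normalized restriction $\mu_{t}:=\mu(\,\cdot\,\cap K_{t})/\mu(K_{t})$ one still has $\underline{h}_{\mu_{t},inv}(\varphi,\C,x)\ge\underline{h}_{\mu,inv}(\varphi,\C,x)>t$ for $x\in K_{t}$ (since $\mu_{t}\le\mu/\mu(K_{t})$ and $\log\mu(K_{t})/S_{n\tau}\varphi\to0$), so by continuity from below of $\mu_{t}$ there are $N\in\N$ and $K_{t}'\subseteq K_{t}$ with $\mu_{t}(K_{t}')>0$ such that $\mu_{t}(\C_{[0,n)}(x))<e^{-t\,S_{n\tau}\varphi(\omega_{\C_{[0,n)}(x)})}$ for all $x\in K_{t}'$ and $n\ge N$. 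As the depth-$n$ cylinders partition $Q$, every countable cover of $K_{t}'$ by cylinders $\C_{[0,n_{i})}(x_{i})$ with $n_{i}\ge N$ has weight $\sum_{i}e^{-t\,S_{n_{i}\tau}\varphi(\omega_{\C_{[0,n_{i})}(x_{i})})}\ge\mu_{t}(K_{t}')>0$, so the Carath\'eodory--Pesin measure at parameter $t$ is positive on $K_{t}'$, i.e.\ ${\rm dim}_{\C}^{BS}(\varphi,K_{t}',Q)\ge t$. Monotonicity and $t\uparrow s$ give ${\rm dim}_{\C}^{BS}(\varphi,K,Q)\ge s$, and taking the supremum over $\mu$ closes this half; note that it used neither compactness nor the clopen hypothesis.

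For the upper bound, fix $t<{\rm dim}_{\C}^{BS}(\varphi,K,Q)$. By the defining Carath\'eodory--Pesin construction this makes the associated Carath\'eodory measure infinite on $K$ (equivalently, by Theorem~\ref{thm 1.2}, $P_{\C}(-t\varphi,K,Q)>0$), so every countable cover of $K$ by cylinders of large enough depth has weight exceeding $1$. From this I would manufacture a Frostman-type probability measure on $K$. The combinatorial core is a covering-to-packing extraction in the spirit of Feng--Huang \cite{fh12}: for every large $N$ one produces a \emph{finite} family of pairwise disjoint cylinders $\C_{[0,n_{i})}(x_{i})$ with $x_{i}\in K$, $n_{i}\ge N$, and $\sum_{i}e^{-t\,S_{n_{i}\tau}\varphi(\omega_{\C_{[0,n_{i})}(x_{i})})}$ bounded below by a fixed positive constant; here compactness of $K$ supplies the finiteness, and the clopen hypothesis makes each $K\cap\C_{[0,n)}(x)$ relatively clopen, so these families genuinely localize mass inside $K$. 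Spreading mass proportionally to $e^{-t\,S_{n_{i}\tau}\varphi}$ over such families along a rapidly growing sequence of depths yields a nested sequence of measures whose weak-$\ast$ subsequential limit $\mu$ is a Borel probability measure with $\operatorname{supp}\mu\subseteq K$, using that $K$ is compact. Clopenness of the cylinders lets weak-$\ast$ convergence transfer a uniform bound $\mu(\C_{[0,n)}(x))\le C\,e^{-t\,S_{n\tau}\varphi(\omega_{\C_{[0,n)}(x)})}$ to $\mu$, valid for all $x\in\operatorname{supp}\mu$ and all large $n$. Dividing by $S_{n\tau}\varphi$ and letting $n\to\infty$ gives $\underline{h}_{\mu,inv}(\varphi,\C,x)\ge t$ for $\mu$-a.e.\ $x$, hence $\underline{h}_{\mu,inv}(\varphi,Q,\C)\ge t$; letting $t\uparrow{\rm dim}_{\C}^{BS}(\varphi,K,Q)$ finishes the proof.

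I expect the main obstacle to be this construction in the upper bound: converting the global positivity $P_{\C}(-t\varphi,K,Q)>0$ into one measure that satisfies a uniform Frostman bound at \emph{every} large depth simultaneously. This forces the combinatorial lemma to be iterated along carefully chosen, fast-growing depths so that the mass allotted at one stage is not destroyed by later refinements, and it is exactly here that the compactness of $K$ (finiteness of the extracted families, existence of a weak-$\ast$ limit in $M(K)$) and the clopen hypothesis on $\C$ (continuity of cylinder masses under weak-$\ast$ limits, and compatibility of each depth-$n$ partition with the subspace $K$) are used in an essential way; the lower bound is comparatively soft, needing only a mass distribution principle and the uniform divergence of $S_{n\tau}\varphi$.
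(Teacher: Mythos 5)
Your lower bound is essentially the paper's argument. You pass through a normalized restricted measure $\mu_t$, which is harmless but unnecessary — the paper works with $\mu$ itself on the set $E_{N_0}$ where the Frostman-type bound $\mu(Q_n(x,\C))<e^{-sS_{n\tau}\varphi(\omega_{\C_{[0,n)}(x)})}$ holds for all $n\geq N_0$; the only small point you gloss over is that one must adjust the covering so that each $Q_{n_i}(x_i,\C)$ actually meets $E_{N_0}$ (replacing $x_i$ by some $y_i\in Q_{n_i}(x_i,\C)\cap E_{N_0}$, which fixes the same cylinder), but this is cosmetic.

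The upper bound is where there is a genuine gap, and it stems from a misreading of Feng--Huang's method. You propose a combinatorial ``covering-to-packing extraction'' iterated along fast-growing depths, with a weak-$\ast$ limit producing the Frostman measure, and you yourself flag the difficulty of keeping a uniform Frostman bound at every depth simultaneously. But that construction is not actually carried out, and it is not what Feng--Huang (or this paper, or \cite{wc12,whs19}) do. The device you are missing is the \emph{weighted} BS invariance dimension $W_{\C}(\varphi,\psi,Q,\lambda,N)$, obtained by replacing the target set $Z$ with a nonnegative bounded function $\psi$ and requiring $\sum_i c_i\chi_{Q_{n_i}(x_i,\C)}\geq\psi$. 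The paper first proves (Proposition~\ref{prop 3.12}, a discrete Vitali-type estimate) that $R_{\C}(\varphi,Z,Q,\lambda+\epsilon,N)\leq W_{\C}(\varphi,Z,Q,\lambda,N)\leq R_{\C}(\varphi,Z,Q,\lambda,N)$, so ${\rm dim}_{\C}^{BS}={\rm dim}_{\C}^{WBS}$. The point of the weighted version is that $p(f):=\frac{1}{c}\,W_{\C}(\varphi,\chi_K\!\cdot\! f,Q,\lambda,N)$ is sublinear, positively homogeneous, and dominated by $\|f\|$ on $C(Q,\R)$, so Hahn--Banach plus the Riesz representation theorem instantly produce a measure $\mu\in M(Q)$ with $\mu(K)=1$ and $\mu(Q_n(x,\C))\leq\frac{1}{c}e^{-\lambda S_{n\tau}\varphi(\omega_{\C_{[0,n)}(x)})}$ for all $x$ and all $n\geq N$ (Lemma~\ref{prop 3.13}). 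Compactness of $K$ and clopenness of the partition are used precisely twice there: to get $\mu(K)=1$ via Urysohn's lemma against compact subsets of $Q\setminus K$, and to get the cylinder bound by applying Urysohn's lemma inside the \emph{open} set $Q_n(x,\C)$. The functional-analytic route entirely avoids the iterated mass-redistribution and the weak-$\ast$ bookkeeping that your sketch defers, so the key lemma you identify as the ``main obstacle'' is not a lemma one should try to prove here — it is a sign that the right intermediate object ($W_{\C}$) was not introduced.
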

  
The rest of this paper is organized as follows. In  section 2,  we   recall some basic settings and fundamental concepts of discrete control systems.  In section 3,  we  introduce the notions of  induced invariance pressure and upper capacity invariance pressure on partitions  by spanning sets and separated sets,  and   prove Theorems \ref{thm 1.1.1} and \ref{thm 1.1}.  In section 4,   we  introduce the notion of BS invariance dimension  and  prove Theorems \ref{thm 1.2} and \ref{thm 1.3}.  

\section{The setup of discrete-time  control systems} 
In this section,  
we    briefly  review   some   concepts associated with entropy  within discrete-time control systems. A systematic treatment of  the  invariance entropy theory  of  deterministic control systems is due to  Kawan's monograph \cite{k13}. Moreover,  the comparisons  between invariance entropy and classical topological entropy are  also presented to clarify the differences between them.

Throughout this paper, we  focus on  a discrete-time  control system  on a  metric space $X$   of the following form
\begin{align}\label{2.1}
x_{n+1}=F(x_n,u_n)=F_{u_n}(x_n), n\in \mathbb{N}=\{0,1,...\},
\end{align}
where  the control-value space $U$ is a compact  metric space, and  $F$ is a  map from $X\times U \to X$  such that   $F_u(\cdot):=F(\cdot,u)$ is  continuous  on $X$ for each  fixed $u\in U$.  Given a  control sequence $\omega=(\omega_0,\omega_1,...)$ in $U$,  the  solution of $(2.1)$ can be  written as
$$\phi(k,x,\omega)=F_{\omega_{k-1}}\circ\cdots\circ F_{\omega_0}(x).$$
For convenience, by  the quintuple $\varSigma:=(\mathbb{N},X,U,\mathcal{U},\phi)$  we denote  the above control system, where $\mathcal{U}$ is a subset of  all sequences $\omega=(\omega_k)_{k \in \mathbb{N}}$ of elements in the control range $U$.

We present  some examples of discrete-time  control systems. Readers can turn  to \cite{k13,hz18,wyc19} for more interesting examples of this aspect.

\begin{ex}
A standard model of (\ref{2.1}) is  the following  scalar linear system of  form 
\begin{align*}
x_{k+1}=qx_k+u_k,
\end{align*}
where $q\in (0,1)$ is a constant, $x_k\in \mathbb{R}_{+}$, and  $u_k $  takes value  in a  compact subset $U$ of $\mathbb{R}_{+}$.  This  gives  a  control system $\varSigma=(\mathbb{N},\mathbb{R}_{+},U,U^{\mathbb{N}},\phi)$,  where the point  $x\in \mathbb{R}_{+}$ 
under the control function $\omega=(u_0,u_1,...)$ at  time $n$ is 
$$\phi(n,x,\omega)=q^nx+\sum_{k=0}^{n-1}q^{n-1-k}u_k.$$
\end{ex}

\begin{ex}
Let  $\{a,b,c\}$ be a discrete topology space, and let  the  state space $X=\{a,b,c\}^{\mathbb{N}}$ be the  product space endowed with product topology. The   map   $\sigma$  on $X$ is  the usual left shift given by  $\sigma(x)=(x_n)_{n+1}$ for each $x=(x_n)_{n\geq 1}$.   
This yields  a discrete-time control system  $\varSigma=(\mathbb{N},X,U,U^{\mathbb{N}},\phi)$ of (\ref{2.1}),  where  $U=\{\sigma,\sigma^2,\sigma^3\}$ is the control-value space. 
\end{ex}

To clarify the essential differences  and  the similarities in the entropy theory between control systems and classical topological  dynamics,    we need to mention  the   definition of topological entropy  and  make a comparison with invariance entropy. 

 Notice that each control   function $\omega=(\omega_0, \omega_1,...)$ induces a discrete non-autonomous dynamical system  $(X,\{F_{\omega_k}\}_{k=0}^{\infty})$, where   $\{F_{\omega_k}\}_{k=0}^{\infty}$  is     a family of  continuous self-maps    on $X$ given in  the expression (\ref{2.1}).  Specially, when   $\omega=(u,u,...)$   is a constant control sequence,   by a pair $(X,T)$ we mean a  discrete topological dynamical system, where $T:=F_u$ is the continuous map from $X$ to $X$. 
 
Define a family  of  Bowen metrics  on $X$ as $$d_n^{\omega}(x,y)=\max_{0\leq j\leq n-1}d (F_1^jx, F_1^jy),$$ where $x,y \in X$, $n\in \mathbb{N}$,  $F_1^0=id$, and $F_1^n=F_{\omega_{n-1}}\circ\cdots\circ F_{\omega_{1}}\circ F_{\omega_{0}}$, for $n\geq 1$,  is the  composition of   the maps $F_{\omega_{j}},j=0,...,n-1$. Then  \emph{the Bowen open  ball and closed ball} of $x$  with radius $\epsilon$  in the metric $d_n^{\omega}$  are  given by 
$$B_n^{\omega}(x,\epsilon)=\{y\in X: d_n^{\omega}(x,y)<\epsilon\},$$
$$\overline B_n^{\omega}(x,\epsilon)=\{y\in X:d_n^{\omega}(x,y)\leq\epsilon\},$$
respectively.

 Given  a non-empty  subset $K$ of $X$, $\epsilon >0$ and $n\in \mathbb{N}$,  a set $E \subset X$ is a  $(n,\omega, \epsilon)$-\emph{spanning set}  of $K$ if   for any $x\in K$, there exists $y \in  E$ such that  $d_n^{\omega}(x,y)\leq \epsilon$.   The smallest cardinality of $(n,\omega,\epsilon)$-spanning  sets  of $K$ is denoted by $r(n,\omega,\epsilon,K)$\footnote{Since  we  only require that  $X$ is a metric space, not necessarily compact,   so $r(n,\omega,\epsilon,K)$ may be $\infty$.}. A set $F \subset K$ is a  $(n,\omega,\epsilon)$-\emph{separated set}  of $K$ if   $d_n^{\omega}(x,y)> \epsilon$   for any  distinct $x,y \in F$.   The largest cardinality of $(n,\omega,\epsilon)$-separated  sets  of $K$ is denoted by $s(n,\omega,\epsilon,K)$. The  \emph{topological entropy} of   $\{F_{\omega_k}\}_{k=1}^{\infty}$ on the set $K$ \cite{ks96} is defined  by 
 \begin{align*}
h_{top}^{\omega}(F_1^{\infty},K)&=\lim\limits_{\epsilon \to 0}\limsup_{n \to \infty }\frac{1}{n}\log r(n,\omega,\epsilon,K)\\
&=\lim\limits_{\epsilon \to 0}\limsup_{n \to \infty }\frac{1}{n}\log s(n,\omega,\epsilon,K).
 \end{align*}

The definition  of topological entropy  measures   the exponential growth rate of   the number of distinguishable   orbit segments that  can be   detected up to  a given  error, and thus quantitatively reflects  the topological complexity of  an abstract topological dynamical system.

 Now, with the topological entropy defined,  we  turn to the concept of invariance entropy  and draw a comparison between them. Given a  control system $\varSigma=(\mathbb{N},X,U,\mathcal{U},\phi)$, a  set $Q\subset X$ is said to be a \emph{controlled  invariant set} if $Q$ is compact  and  for any $x\in Q$ there exists $\omega_x\in\mathcal{U}$ such that $\phi(\mathbb{N},x,\omega_x)\subset Q$. Fix a controlled invariant set  of $Q$. A  triple $\mathcal C =(\mathcal A, \tau,\nu)$ is  an  \emph{invariant partition} of $Q$ if $\mathcal A=\{A_1,...,A_q\}$ is a finite Borel  partition  of  $Q$, $\tau\in \mathbb{Z}_{+}$, and $\nu: \mathcal A \to U^{\tau}$ is a map such that 
$$\phi(j,A_i,\nu(A_i))\in Q$$
for   all $i=1,2,...,q$ and  each  $j \in [0,\tau] \cap \mathbb{N}$. Let  $\omega_i:=\nu(A_i)$, for $i\in S:=\{1,...,q\}$. Given $n\in \mathbb{N}$ and a  word $s=(s_0,s_1,...,s_{n-1})\in S^n$, we write  $l(s)=n$ to denote the length of $s$. The \emph{concatenation of $n$ controls}  labeled  by the word  $s$ is defined by 
$$\omega_s:=\omega_{s_0}\omega_{s_1}\cdots \omega_{s_{n-1}}.$$ 
The   word $s=(s_0,s_1,...,s_{n-1})$  is called an  \emph{admissible word with length  $n$} if  the set
$$\mathcal{C}_s(Q):=\{x\in Q: \phi(j\tau,x,\omega_{s_0,...,s_{n-1}})\in A_{s_j}, ~\text{for}~{j=0,1,...,n-1}\}$$
is not empty.
The set of all admissible words  with  length  $n$ is  denoted by $\mathcal{L}^n{(\mathcal{C})}$. Notice that if  $\mathcal C =(\mathcal A, \tau,\nu)$ is an invariant partition of $Q$, then for every $x\in Q$ there exists  a  unique   $\mathcal{C}(x) \in S^{\mathbb{N}}$  such that 
$$\phi(j\tau,x,\omega_{\mathcal{C}(x)})\in A_{\mathcal{C}_j(x)},$$
for every $j\in \mathbb{N}$.  For $m\leq n$, let  $\mathcal C_{[m,n]}(x)=(\mathcal{C}_m(x),...,\mathcal{C}_n(x))$. We call   $\mathcal C_{[m,n]}(x)$   the \emph {orbit address}  of $x$ from the time $m$ to $n$ remaining in $Q$. Let  $n\in \mathbb{N}$ and $x\in Q$. The \emph{cylindrical set of $x$  of length $n$}  w.r.t. $\mathcal C$ is defined by 
\begin{align*}
Q_n(x, \mathcal C):&
=\{y\in Q :\mathcal{C}_j(y)=\mathcal{C}_j(x),\text{for}~j=0,1,....,n-1\}\\
&=\{y\in Q :\phi(j\tau,y,\omega_{\mathcal {C}_{[0,n)}(x)})\in A_{\mathcal{C}_j(x) },\text{for}~j=0,1,....,n-1\}\\
&= \cap_{j=0}^{n-1}  \phi_{j\tau, \omega_{\mathcal C_{[0,n)}(x)}}^{-1} (A_{\mathcal{C}_j(x)}),
\end{align*}
where the map  $\phi_{t,\omega}:  X \rightarrow X$, given by  $\phi_{t,\omega}(x):=\phi(t,x,\omega)$, is continuous by the assumption of  (\ref{2.1}).  

 Observe that   $Q_n(x, \mathcal C)$  is the set  of all points in $Q$ that have the same orbit address with $x$ from time $0$ to $n-1$. Therefore,  for  any  two points $x,y\in Q$ with $x\not=y$,  either $Q_n(x, \mathcal C)=Q_n(y, \mathcal C)$ or $Q_n(x, \mathcal C)\cap Q_n(y, \mathcal C)=\emptyset$.  
 
 Each admissible word   with length $n$ naturally defines the  cylindrical sets  for some points  of  $Q$ with the same length $n$,  and each  cylindrical set  with length $n$  is also labeled by an admissible word with length $n$. More precisely,  $ \mathcal{C}_{\omega_{\mathcal C_{[0,n)}(x)}}(Q)=Q_n(x, \mathcal C)$ for each $x\in Q$, and  for each   admissible word $s$ with length $n$,  one has $Q_n(x, \mathcal C)=\mathcal{C}_s(Q)$  since $\C_{[0,n)}(x)=s$ for all $x\in \mathcal{C}_s(Q)$.

\begin{rem}
\begin{enumerate}
\item The cylindrical  set  $Q_n(x, \mathcal C)$  is   in fact an analogue of the   Bowen open  ball  in  the classical case,  which is adapted to the control setting by using invariant partition.
\item  The cylindrical set $Q_n(x, \mathcal C)$  is not  necessarily open since each element of the partition is only Borel  measurable. However,  if  $\mathcal{A}$ is a clopen partition of $Q$, then  the set $Q_n(x, \mathcal C)$ is   both  open and  closed  in $Q$ for all $x\in Q$.  See \cite[Section 8]{whs19}  for the examples of   clopen  invariant partitions.
 \end{enumerate}
\end{rem}

Let $\mathcal C =(\mathcal A, \tau,\nu)$   be an invariant partition of $Q$, $K\subset Q$ and $n\in \mathbb{N}$. A set $E \subset Q$ is a $(\mathcal{C},n,K)$-\emph{spanning set} if for any $x \in K$, there exists $y\in E$ such that $\C_{[0,n)}(x)=\C_{[0,n)}(y)$. The smallest cardinality of a $(\mathcal{C},n,K)$-spanning set is denoted by $r(\mathcal{C},n,K)$. A  set $F \subset K$ is  a $(\mathcal{C},n,K)$-\emph{separated  set} if for any $x, y \in F$ with $x \neq y$, $\C_{[0,n)}(x)\not = \C_{[0,n)}(y)$ (that is, $Q_n(x,\mathcal{C})\cap Q_n(y,\mathcal{C})=\emptyset $).  The largest cardinality of  a $(\mathcal{C},n,K)$-separated set is denoted by $s(\mathcal{C},n,K)$. The  \emph{invariance entropy} of $K$ w.r.t. $\mathcal{C}$ \cite{ck,whs19}  is defined  by 
$$h_{inv}(K,Q,\mathcal{C})=\limsup_{n \to \infty }\frac{1}{n}\log r(\mathcal{C},n,K)=\limsup_{n \to \infty }\frac{1}{n}\log s(\mathcal{C},n,K).$$

Although  invariance entropy is formulated in a similar fashion,  the  control system   pays  more attention to  how many   control functions are used for  achieving  the  certain  control task,  of   keeping  a subset of  a  controlled invariant set invariant,  rather than the distinguishable   orbit segment itself.  For instance,   control systems with (outer) zero invariance entropy are in general   easier to realize the control  tasks since the  needed number  of   control functions is always  uniformly  bounded for all time $n$. Some interesting  work  toward this direction  can be found in \cite{wyc19, zhc21,zhc23}.  To sum up, the  two types of  entropies undertake   different  roles   in capturing the complexity of systems, which   have  the  special meanings for  understanding  the dynamical behaviors of the systems. 
Although entropy theory of the classical topological dynamical systems is  well-developed, how to inject the   ideas  and develop the techniques  from the classical   ones    to understand  the   dynamics of   control systems is still a prolonged  work.

\section{Bowen's equation for  upper capacity invariance pressure}
In this section, we introduce the induced invariance pressure on partitions in subsection 3.1, and prove  Theorem \ref{thm 1.1.1} in subsection 3.2. The  proof of Theorem \ref{thm 1.1} is  given in subsection 3.3.

\subsection{Induced invariance pressure on partitions}
 In this subsection, we first review the  concept of induced topological  pressure in classical dynamical systems,  and  then introduce  the corresponding notion  on   partitions to extend several existing  invariance  pressure-like quantities in discrete-time control systems.

We present the  notions of topological pressure and induced topological  pressure  in topological dynamical systems \cite{w82,jms14,xc}.   Let $(X,f)$  be  a  topological dynamical system  with a   continuous self-map  $f$ on  the  compact metric space $X$.  Denote by $C(X,\mathbb{R})$ the space  of   real-valued continuous maps on $X$  equipped with  the supremum norm. For $n\in\mathbb{N}$ and $\varphi \in  C(X,\mathbb{R})$, we write
$S_n\varphi(x):=\sum_{j=0}^{n-1}\varphi(f^jx)$ to denote the   sum of the energy of $\varphi$ along the orbit $\{x,f(x),...,f^{n-1}(x)\}$.  The \emph{topological  pressure} of $\varphi$  is given by

$$P(f,\varphi)=\lim\limits_{\epsilon \to 0}\limsup_{n \to \infty }\frac{1}{n}\log \sup_{F_n}\{\sum_{x\in F_n}e^{S_n\varphi(x)}\},$$
where  the supremum  is  taken over  the set of  all $(n,\epsilon)$-separated sets of $X$.   Topological pressure of $\varphi$ reflects  how chaotic the system is, which  measures  the exponential growth rate of   the  total energy of   distinguishable   orbit segments along a continuous potential.   The role of $\varphi$ in  some physical systems is quite clear. For instance, it can represent pressure  or temperature to  determine the  possible status of the physical system that is forced   by a  certain rule  over a long period.  To attain  the  purpose of   a more precise measurement of chaos of systems,   a basic strategy is   introducing a scaling function  as an observable function  to split the whole phase space $X$ into a proper finite  partition that depends on the time. Then  the total energy can be computed by summing the energy on  each set of the partition. This can be clarified as follows  in a specific way.

Let  $\psi\in C(X,\mathbb{R})$ with $\psi >0$ be a  scaling function. Given  $T>0$, let $\hat{S}_T$   denote the  set of  time $n$  so that  there exists $x\in X$ such that  $S_{n}\psi(x)\leq T $ and $ S_{(n+1)}\psi(x)>T$. For $n\in  \hat{S}_T$, we define 
$$\hat{X}_n=\{x\in X: S_{n}\psi(x)\leq T~\text{and}~  S_{(n+1)}\psi(x)>T \}.$$
It is easy to see that  the  collection $\{\hat{X}_n\}_{n \in  \hat{S}_T}$ of Borel sets of $X$ is pairwise disjoint  and  $X=\cup_{n\in  \hat{S}_T} \hat{X}_n$. Put
\begin{align*}
	P_{\psi,T}(f,\varphi,\epsilon)
	= \sup\{\sum_{n\in \hat{S}_T}\sum_{x \in F_n}\limits e^{S_n\varphi(x)}\},
\end{align*}
where $F_n$ is an  $(n,\epsilon)$-separated set  of $X_n$.
The \emph{$\psi$-induced topological pressure of $\varphi$} is defined by
$$P_{\psi}(\varphi)=\lim\limits_{\epsilon \to 0}\limsup_{T \to \infty}\frac{1}{T}\log 	P_{\psi,T}(f,\varphi,\epsilon).$$

To sum up, the function $\psi$ is responsible for partitioning the  whole phase space,  and the function $\varphi$ is  responsible for collecting the energy on  set $ \hat{X}_n$.  In the setting of  infinite
conformal iterated function systems \cite{jk11},  $\psi$ is   the geometric potential associated to the cIFS, and  $\varphi$ is the potential defining the level sets under consideration; the  free energy function  given in \cite{jk11} coincides  with  the (special) induced topological pressure.  See also \cite{jms14} for  more  interesting examples of  induced topological pressure of subshift of  finite type.

In fact,  the concept of induced topological pressure is    a generalized notion of pressure-like quantities in  dynamical systems.  

\begin{rem}
\begin{itemize}
\item [(1)] When $\psi=1$,  it is  reduced to  the topological pressure $P(f,\varphi)$;
\item [(2)] When $\psi=1$ and $\varphi=0$,  it  is reduced to the topological entropy $h_{top}(f)$; 
\item [(3)] When $\varphi=0$,  it  is reduced to  the BS dimension ${\rm dim}_{BS,\psi}(X)$ of $X$(cf. \cite[Proposition 4.1]{xc}). In particular, if  $f$ is a $C^{1+\epsilon}$ conformal
expanding map on $X$ and $\psi(x)=\log||d_xf||$,  it is  reduced to the Hausdorff dimension  of $X$.
\end{itemize}
 
\end{rem}

We  are now in a position to inject the ideas  of  \cite{w82,jms14,xc}  into  discrete-time control systems to define  induced invariance pressure.

Throughout the rest of  this paper, for  $\omega \in U^n$, $n\in \N$ and  $\varphi,\psi \in C(U,\mathbb{R})$ with $\psi >0$, we define  $S_n\varphi(\omega):=\sum_{j=0}^{n-1}\varphi(\omega_i)$, $m:=\min_{u\in U}\psi(u)$ and $||\psi||:=\max_{u\in U}|\psi(u)|$.    
Let $\mathcal C =(\mathcal A, \tau,\nu)$  be  an invariant partition of $Q$. Put 
$$m(\varphi,Q,n,\C)=\sup_{F_n}\{\sum_{x\in F_n}e^{S_{n\tau}\varphi({\omega_{\C_{[0,n)}(x)}})}\},$$
where  the supremum  ranges over all $ (\C,n,Q)$-separated sets. 
\begin{df}\label{def 2.1}
The upper capacity invariance  pressure  of $\varphi$ on $Q$ w.r.t. $\mathcal{C}$  is defined by 
$$P_{inv}(\varphi,Q,\mathcal{C})=\limsup_{n \to \infty }\frac{1}{n\tau}\log m(\varphi,Q,n,\C).$$
\end{df} 

Given $T>0$ and $\psi \in C(U,\mathbb{R})$ with $\psi>0$, let $S_T$   denote the  set of  time $n$ such  that  there exists $x\in Q$ such that  $S_{n\tau}\psi({\omega_{\C_{[0,n)}(x)}})\leq T\tau$ and $ S_{(n+1)\tau}\psi({\omega_{\C_{[0,n+1)}(x)}})>T\tau$.
For  each $n\in S_T$,  we define the set
$$X_n=\{x\in Q:  S_{n\tau}\psi({\omega_{\C_{[0,n)}(x)}})\leq T\tau~ \mbox{and}~ S_{(n+1)\tau}\psi({\omega_{\C_{[0,n+1)}(x)}})>T\tau\}.$$
Clearly, the set $X_n$ is a  (finite) union of some cylindrical  sets $Q_{n+1}(x,\C)$ for some $x$. Put 
\begin{align*}
&P_{inv,\psi, T}(\varphi,Q,\C)\\
= &\sup\left\{\sum_{n\in S_T}\sum_{x \in F_n}\limits e^{\S}: F_n \mbox{ is a}~ (\C,n,X_n)\mbox{-separated set} \right\}.
\end{align*}

\begin{df}
The $\psi$-induced  invariance pressure  of $\varphi$ on $Q$ w.r.t. $\mathcal{C}$ is defined by 
$$P_{inv,\psi }(\varphi,Q,\C)=\limsup_{T \to \infty}\frac{1}{T\tau}\log P_{inv,\psi, T}(\varphi,Q,\C).$$
\end{df}

\begin{rem}\label{rem 3.4}
\begin {enumerate}
\item  If $S_T\not=\emptyset$, then for each $n\in S_T$,  we have $\frac{T}{||\psi||} -1<n\leq \frac{T}{m}$, which shows  $S_T$ is a finite set.
\item If $\psi =1$,  noting that $S_T$ is a singleton, then  $P_{inv,1 }(\varphi,Q,\C)= P_{inv}(\varphi,Q,\mathcal{C}).$
\item For every  $\psi \in C(U,\R)$  with $\psi >0$, the function $$P_{inv,\psi }(\cdot,Q,\C): C(U.\R)\rightarrow \R$$ is finite.
Indeed,   for sufficiently large  $T>0$, we have  $P_{inv,\psi, T}(\varphi,Q,\C)$
$\geq e^{-\frac{T}{m}\tau ||\varphi||}$ and hence $P_{inv,\psi }(\varphi,Q,\C)\geq -\frac{1}{m}||\varphi||>-\infty.$ On the other hand, noticing that for each $n\in S_T$  the set $X_n$ is a union of some cylindrical sets $Q_{n+1}(x,\C)$, this means  if $F_n$ is a $(\C,n,X_n)$-separated set, then $\#F_n\leq ( \#\mathcal{A})^{n+1}$.  Therefore,
\begin{align*}
\sum_{n\in S_T}\sum_{x \in F_n}\limits e^{\S}&\leq\sum_{n\in S_T} ( \#\mathcal{A})^{n+1} e^{n\tau||\varphi||}\\
&\leq (\frac{T}{m}-\frac{T}{||\psi||}+1) (\#\mathcal{A})^{\frac{T}{m}+1} e^{\frac{T}{m}\tau||\varphi||}.
\end{align*}
This gives us  $P_{inv,\psi, T}(\varphi,Q,\C)\leq (\frac{T}{m}-\frac{T}{||\psi||}+1) (\#\mathcal{A})^{\frac{T}{m}+1} e^{\frac{T}{m}\tau||\varphi||}$. Consequently, 
$$P_{inv,\psi }(\varphi,Q,\C)\leq \frac{\log\#\mathcal{A}}{m\tau}+\frac{1}{m}||\varphi||<\infty.$$
\end{enumerate}
\end{rem}

The following proposition formulates  an equivalent definition for  induced invariance pressure using spanning sets.

Let 
\begin{align*}
	&Q_{inv,\psi, T}(\varphi,Q,\C)\\
	= &\inf\left\{\sum_{n\in S_T}\sum_{x \in E_n}\limits e^{\S}: E_n \mbox{ is a}~ (\C,n,X_n)\mbox{-spanning set}\right\}.
\end{align*}

\begin{prop}\label{poro 3.4}
Let $Q$ be a  controlled  invariant set and  $\mathcal C =(\mathcal A, \tau,\nu)$ be an invariant partition of  $Q$, and let $\varphi,\psi \in C(U,\mathbb{R})$ with $\psi>0$. 
Then $$P_{inv,\psi }(\varphi,Q,\C)=\limsup_{T \to \infty}\frac{1}{T\tau}\log Q_{inv,\psi, T}(\varphi,Q,\C).$$ 
\end{prop}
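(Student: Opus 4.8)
The plan is to prove the standard two–sided comparison between separated and spanning sets, adapted to the partition setting with the scaling function $\psi$. The easy inequality is $Q_{inv,\psi,T}(\varphi,Q,\C)\le P_{inv,\psi,T}(\varphi,Q,\C)$ for every $T>0$: a maximal $(\C,n,X_n)$-separated set $F_n$ is automatically a $(\C,n,X_n)$-spanning set (if some $x\in X_n$ had a distinct partition address from every point of $F_n$, then $F_n\cup\{x\}$ would still be separated, contradicting maximality), so taking an infimum over spanning sets of the quantity $\sum_{n\in S_T}\sum_{x\in E_n}e^{\S}$ can only decrease it. Passing to $\limsup_{T\to\infty}\frac1{T\tau}\log(\cdot)$ gives one direction.

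For the reverse inequality I would exploit the crucial structural fact, already noted in the excerpt, that each $X_n$ is a finite union of cylindrical sets $Q_{n+1}(x,\C)$, and that two points with the same address up to time $n$ lie in the same cylinder $Q_n(\cdot,\C)$. Hence on $X_n$ there are only finitely many distinct $n$-addresses, say realized by representatives $x_1,\dots,x_{k_n}$; the set $\{x_1,\dots,x_{k_n}\}$ is simultaneously a $(\C,n,X_n)$-separated set and a $(\C,n,X_n)$-spanning set, and moreover it is the \emph{largest} separated set and a minimal-in-addresses spanning set. Because the weight $e^{\S}=e^{S_{n\tau}\varphi(\omega_{\C_{[0,n)}(x)})}$ depends on $x$ only through its address $\C_{[0,n)}(x)$, the sum $\sum_{x\in F}e^{\S}$ over \emph{any} set $F$ containing exactly one point per address equals $\sum_{j=1}^{k_n}e^{S_{n\tau}\varphi(\omega_{\C_{[0,n)}(x_j)})}$. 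Therefore both $P_{inv,\psi,T}$ and $Q_{inv,\psi,T}$ are attained by this canonical choice of representatives, and in fact
\begin{align*}
P_{inv,\psi,T}(\varphi,Q,\C)=Q_{inv,\psi,T}(\varphi,Q,\C)=\sum_{n\in S_T}\sum_{j=1}^{k_n}e^{S_{n\tau}\varphi(\omega_{\C_{[0,n)}(x_j^{(n)})})},
\end{align*}
since the supremum over separated sets is maximized, and the infimum over spanning sets minimized, precisely when each address is used exactly once.

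So the main point — and the only place requiring care — is verifying that the weight function is genuinely address-measurable and that a spanning set cannot do better than "one point per address": a spanning set could in principle omit an address that does not occur as the full address of any point of $X_n$ but, since $X_n$ is literally the union of the cylinders $Q_{n+1}(x,\C)$ and every such cylinder contributes points with a well-defined $n$-truncated address, every address that appears must be covered, giving the lower bound $\sum_j e^{S_{n\tau}\varphi(\cdot)}$; and it need not cover more, giving the matching upper bound. I expect the subtlety, if any, to be bookkeeping around the index set $S_T$ (which is finite by Remark \ref{rem 3.4}(1)) and the distinction between addresses of length $n$ versus $n+1$ in the definition of $X_n$ — but since $\C_{[0,n)}(x)$ is a truncation of $\C_{[0,n+1)}(x)$, this causes no real trouble. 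Having shown $P_{inv,\psi,T}=Q_{inv,\psi,T}$ for all $T$, the proposition follows immediately by taking $\limsup_{T\to\infty}\frac1{T\tau}\log(\cdot)$ of both sides and invoking the definition of $P_{inv,\psi}(\varphi,Q,\C)$.
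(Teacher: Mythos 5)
Your proof is correct and takes essentially the same approach as the paper: both directions rest on the observation that the weight $e^{S_{n\tau}\varphi(\omega_{\C_{[0,n)}(x)})}$ depends only on the address $\C_{[0,n)}(x)$, that a maximal separated set is automatically spanning, and that any spanning set must cover every address appearing in $X_n$. The paper phrases the reverse inequality via an injection $\Phi:F_n\to E_n$ matching addresses rather than your canonical one-point-per-address representatives, but this is cosmetic; your slightly stronger conclusion $P_{inv,\psi,T}=Q_{inv,\psi,T}$ for each $T$ is also implicit in the paper's two-sided comparison before the $\limsup$ is taken.
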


\begin{proof}
Let $T>0$.  Notice that for each $n\in S_T$, a  $(\C,n,X_n)$-separated  set  with the  maximal cardinality is also a $(\C,n,X_n)$-spanning set. This shows that
$$P_{inv,\psi }(\varphi,Q,\C)\geq \limsup_{T \to \infty}\frac{1}{T\tau}\log Q_{inv,\psi, T}(\varphi,Q,\C).$$

Now,  let $E_n$ be a $(\C,n,X_n)$-spanning set and  $F_n$ be a $(\C,n,X_n)$-separated set.  Consider the map $\Phi: F_n \rightarrow E_n$ by assigning each $x\in F_n$ to $\Phi(x)\in E_n$ such that  $\C_{[0,n)}(\Phi(x))=\C_{[0,n)}(x)$. Then $\Phi$ is injective.
Therefore,
\begin{align*}
&\sum_{n\in S_T}\sum_{y \in E_n}\limits e^ {S_{n\tau}\varphi({\omega_{\C_{[0,n)}(y)}})}\\
\geq&\sum_{n\in S_T}\sum_{x \in F_n}\limits e^ {S_{n\tau}\varphi({\omega_{\C_{[0,n)}(\Phi(x))}})}\\
=&\sum_{n\in S_T}\sum_{x \in F_n}\limits e^ {S_{n\tau}\varphi({\omega_{\C_{[0,n)}(x)}})}, \text{by}~ \C_{[0,n)}(\Phi(x))=\C_{[0,n)}(x).
\end{align*}
 It follows that 
$$P_{inv,\psi }(\varphi,Q,\C)\leq\limsup_{T \to \infty}\frac{1}{T\tau}\log Q_{inv,\psi, T}(\varphi,Q,\C).$$

\end{proof}

\begin{rem}
By Remark   \ref{rem 3.4}, (2) and  Proposition  \ref{poro 3.4},  one has an   equivalent  definition  for  $P_{inv}(\varphi,Q,\mathcal{C})$   given by spanning sets, that is,
$$P_{inv}(\varphi,Q,\mathcal{C})=\limsup_{n \to \infty }\frac{1}{n\tau}\log \inf_{E_n}\{\sum_{x\in E_n}e^{S_{n\tau}\varphi({\omega_{\C_{[0,n)}(x)}})}\}$$
with  the infimum  is taken  over  all $ (\C,n,Q)$-spanning sets. 

Recall that    the  upper capacity invariance pressure \cite[Section 4]{zh19} on invariant partition  using admissible words  is defined by 
$$P_{inv}^{*}(\varphi,Q,\mathcal{C}):=\limsup_{n \to \infty }\frac{1}{n\tau}\log \inf_{\mathcal{G}}\{\sum_{s\in \mathcal{G}} e^{S_{n\tau}\varphi(\omega_s)}\},$$
where the infimum is taken over  all finite  families $\mathcal{G}\subset \mathcal{L}^n(\mathcal C)$  such that $\cup_{s\in \mathcal{G}}\mathcal{C}_s(Q) \supset Q$.	
 Using the facts $ \mathcal{C}_{\omega_{\mathcal C_{[0,n)}(x)}}(Q)=Q_n(x, \mathcal C)$ for each $x\in Q$, and  for each   admissible word $s$ with length $n$,  one has $Q_n(x, \mathcal C)=\mathcal{C}_s(Q)$  since $\C_{[0,n)}(x)=s$ for all $x\in \mathcal{C}_s(Q)$, it is  readily shown that  $P_{inv}^{*}(\varphi,Q,\mathcal{C})=P_{inv}(\varphi,Q,\mathcal{C})$.
\end{rem}

\subsection{A dimensional characterization  for $P_{inv,\psi }(\varphi,Q,\C)$}
 To establish Bowen's equation for upper capacity invariance pressure on partitions,  we   formulate   an equivalent characterization for  $P_{inv}(\varphi,Q,\mathcal{C})$  whose definition  has  a dimensional flavor,  which  plays a critical role in  our later proofs.
 
For  each $T>0$,  we define the (one-sided) time set
$$G_T:=\{n\in \mathbb{N}: \exists x \in Q ~\mbox{such that } S_{n\tau}\psi({\omega_{\C_{[0,n)}(x)}})>T\tau\}.$$
Given  $n\in G_T$, we put
$$Y_n=\{x\in Q: S_{n\tau}\psi({\omega_{\C_{[0,n)}(x)}})>T\tau\}.$$
and
\begin{align*}
&R_{inv,\psi, T}(\varphi,Q,\C)\\
= &\sup\left\{\sum_{n\in G_T}\sum_{x \in F_n^{'}}\limits e^ {\S}:F_n ^{'}\mbox{ is a}~ (\C,n,Y_n)\mbox{-separated set} \right\}.
\end{align*}

 The  dimensional characterization of  induced topological pressure is  presented in  Theorem \ref{thm 1.1.1}.
\begin{proof}[Proof of Theorem \ref{thm  1.1.1}]
For any   $n\in \mathbb{N}$ and any $ x\in Q$,  we define $m_n(x)$ as the unique positive integer  satisfying that 
\begin{align}\label{equ  2.3}
(m_n(x)-1)||\psi||\tau<S_{n\tau}\psi({\omega_{\C_{[0,n)}(x)}})\leq m_n(x)||\psi||\tau.
\end{align}
It is easy to check that for any $x\in Q$,  the inequality 
\begin{align}\label{equ 2.4}
e^{-\beta ||\psi||\tau m_n(x)}e^{-|\beta|||\psi||\tau}\leq e^{-\beta \S}\leq e^{-\beta ||\psi||\tau m_n(x)}e^{|\beta|||\psi||\tau}
\end{align}
 holds for any $\beta \in\mathbb{R}$. Define 
\begin{align*}
R_{inv,\psi, T}^{*}(\beta):=\sup\{\sum_{n\in G_T}\sum_{x \in F_n^{'}}\limits e^ {\S-\beta||\psi||\tau m_n(x)} \},
\end{align*}
where $F_n ^{'}$  is a  $(\C,n,Y_n)$-separated set. 
By   inequality (\ref{equ 2.4}), to show the equality 
\begin{align*}
P_{inv,\psi }(\varphi,Q,\C)=\inf\{\beta \in \mathbb{R}: \limsup_{T \to \infty}R_{inv,\psi, T}(\varphi-\beta\psi,Q,\C)<\infty\},
\end{align*}
 it suffices to verify 
\begin{align*}
P_{inv,\psi }(\varphi,Q,\C)
=\inf\{\beta \in \mathbb{R}: \limsup_{T \to \infty}R_{inv,\psi, T}^{*}(\beta)<\infty\}.
\end{align*}
For simplifying the  notions, we let  $$\text{lhs}:=P_{inv,\psi }(\varphi,Q,\C)$$ and
$\text{rhs}:=\inf\{\beta \in \mathbb{R}: \limsup_{T \to \infty}R_{inv,\psi, T}^{*}(\beta)<\infty\}.$

We firstly show {\rm lhs  $\leq$ rhs}.  Let  $\beta<P_{inv,\psi }(\varphi,Q,\C)$.  Choose $\delta>0$ and  a  subsequence $\{T_j\}_{j\in \mathbb{N}}$ that   converges  to  $\infty$ as $j \to \infty$  satisfying
 $$\beta+\delta<P_{inv,\psi }(\varphi,Q,\C),$$
and  $$P_{inv,\psi }(\varphi,Q,\C)=\lim_{j \to \infty}\frac{1}{T_j\tau}\log P_{inv,\psi, T_j}(\varphi,Q,\C).$$
Then there exists  $J_0$ such that for any $j\geq J_0$, one can choose    a $(\C,n,X_n)$-separated set $F_n$ with  $n\in S_{T_j}$ satisfying 
\begin{align}\label{equ 2.5}
e^{T_j\tau(\beta+\delta)}<\sum_{n\in S_{T_j}}\sum_{x\in F_n}e^{\S}.
\end{align}  
Noting  that $T_j \to \infty$,  we   can   choose   a subsequence $\{T_{j_{k}}\}_{k \geq 1}$ such that 
$$\frac{T_{j_k}}{m}+1<\frac{T_{j_{k+1}}}{||\psi||}-1.$$
Without loss of generality, we still denote  the  subsequence $\{T_{j_k}\}_{k\geq 1}$ by $\{T_j\}_{j\geq 1}$.    Then the time sets $S_{T_i}\cap S_{T_j}=\emptyset$ for any $i,j \geq  J_0$ with $i\not= j$.
For each $j\geq J_0$ and $n\in S_{T_j}$, we have $$(T_j-||\psi||)\tau<S_{n\tau}\psi({\omega_{\C_{[0,n)}(x)}})\leq T_j\tau$$ for all $x\in F_n$.  Together with  the inequality (\ref{equ 2.3}), we get 
\begin{align}\label{equ 2.6}
|||\psi||\tau m_n(x)-T_j\tau|<2||\psi||\tau.
\end{align}
and hence $-\beta ||\psi||\tau m_n(x)\geq -\beta T_j\tau -2|\beta|\tau||\psi||.$ 

Therefore,
 \begin{align*}
R_{inv,\psi, T}^{*}(\beta)
\geq& \sum_{j\geq J_0,\atop T_j-||\psi||>T}\sum_{n\in S_{T_j}}\sum_{x\in F_n}
e^{\S-\beta||\psi||\tau m_n(x)}\\  
\geq& e^{-2|\beta|\tau||\psi||}\sum_{j\geq J_0,\atop T_j-||\psi||>T}\sum_{n\in S_{T_j}}\sum_{x\in F_n}
e^{\S-\beta T_j\tau}\\
\geq& e^{-2|\beta|\tau||\psi||}\sum_{j\geq J_0, T_j-||\psi||>T}
e^{\delta T_j\tau },~~~~~~~ \text{by~~(\ref{equ 2.5}) }\\
=&\infty.
\end{align*}
This leads to 
\begin{align}\label{equ 2.7}
\limsup_{T \to \infty}R_{inv,\psi, T}^{*}(\beta)=\infty,
\end{align}
which implies that {\rm lhs $\leq$ rhs}.

We proceed to show {\rm lhs $\geq$ rhs}.  Let $\delta >0$ and  then choose an $l_0\in \mathbb{N}$ such that   for  any $l\geq l_0$, 
\begin{align}\label{equ 2.8}
&P_{inv,\psi, lm}(\varphi,Q,\C)<e^{ lm\tau(P_{inv,\psi }(\varphi,Q,\C)+\frac{\delta}{2})},
\end{align}
where   $m=\min_{u\in U}\limits \psi(u)>0.$  Let $\beta :=P_{inv,\psi }(\varphi,Q,\C)+\delta$. For each $l\geq l_0$ and   $n\in S_{lm}$, similar to  (\ref{equ 2.6}) if $F_n$  is a  $(\C,n,X_n)$-separated set, then 
$$|||\psi||\tau m_n(x)-lm\tau|<2||\psi||\tau$$
and
\begin{align} \label{equ 2.9}
-\beta||\psi||\tau m_n(x) 
\leq -\beta lm\tau +2||\psi||\tau\cdot |\beta|. 
\end{align}
For  sufficiently large  $T>l_0m$,  if $n\in G_T$ and  $F_n^{'}$  is a $(\C,n,Y_n)$-separated set, then for each $x\in F_n^{'}$, there exists a  unique $l> l_0$ such that $(l-1)m\tau<S_{n\tau}\psi({\omega_{\C_{[0,n)}(x)}})\leq lm\tau$. Then  we  get $$S_{(n+1)\tau}\psi({\omega_{\C_{[0,n+1)}(x_1)}})=S_{n\tau}\psi({\omega_{\C_{[0,n)}(x)}})+S_{\tau}\psi({\omega_{\C_n(x)}})>lm\tau.$$  This shows that $n\in S_{lm}$ and $x\in X_n$.  
Therefore, 
\begin{align*}
R_{inv,\psi, T}^{*}(\beta)
\leq &\sum_{l> l_0}\sup\sum_{n\in S_{lm}}\sum_{x\in F_n}
e^{\S-\beta||\psi||\tau m_n(x)}\\
\leq& e^{2||\psi||\tau\cdot |\beta|}
\sum_{l> l_0}\sup\sum_{n\in S_{lm}}\sum_{x\in F_n}
e^{{\S-\beta lm\tau }}~~ \text{by (\ref{equ 2.9})} \\
\leq& e^{2||\psi||\tau\cdot |\beta|}
\sum_{l> l_0}e^{-\frac{\delta}{2}m\tau l} ~~ \text{by (\ref{equ 2.8})}.
\end{align*}
This yields that  $\limsup_{T \to \infty}R_{inv,\psi, T}^{*}(\beta)<\infty$.
Letting $\delta \to 0$, we get  {\rm lhs $\geq$ rhs}.
\end{proof}  
\subsection{Proof of Theorem 1.2}
Now, we give the proof of  Theorem \ref{thm 1.1}.
\begin{proof}[Proof of Theorem \ref{thm 1.1}]
We divide the proof into two steps.

{Step 1.} we show $P_{inv}(\varphi-\beta \psi,Q,\mathcal{C})=0$ has a  unique root. 

Consider the  map $$\Phi:\beta \in \mathbb{R} \longmapsto \Phi(\beta):=P_{inv}(\varphi-\beta \psi,Q,\mathcal{C}).$$
By Remark \ref{rem 3.4},  $\Phi(\beta)$ is finite for all  $\beta \in \R$.

We show $\Phi$ is  a continuous and strictly decreasing function on $\mathbb{R}$. Let $\beta_1,\beta_2\in \mathbb{R}$, $n\in \N$ and $F_n$ be a $(\C,n,Q)$-separated set. Then 
\begin{align*}\label{inequ 2.10}
&\sum_{x\in F_n} e^{{S_{n\tau}(\varphi -\beta_2 \psi)(\omega_{\C_{[0,n)}(x)})}-|\beta_1-\beta_2|n\tau\cdot||\psi||}\\
\leq&\sum_{x\in F_n}  e^{{S_{n\tau}(\varphi -\beta_1 \psi)(\omega_{\C_{[0,n)}(x)})}}\\
\leq&\sum_{x\in F_n} e^{{S_{n\tau}(\varphi -\beta_2 \psi)(\omega_{\C_{[0,n)}(x)})}+|\beta_1-\beta_2|n\tau\cdot||\psi||},
\end{align*}
which implies that 
\begin{align}
\Phi(\beta_2)-|\beta_1-\beta_2|||\psi||\leq \Phi(\beta_1)
\leq \Phi(\beta_2)+|\beta_1-\beta_2|||\psi||.
\end{align}
Then  the continuity of   $\Phi$ follows from   the    inequality 
$$|\Phi(\beta_1)-\Phi(\beta_2)|\leq||\psi||\cdot |\beta_1-\beta_2|.$$
Let  $\beta_1, \beta_2 \in \mathbb{R}$  with $\beta_1<\beta_2$.  One can similarly obtain  that 
\begin{align}\label{equ 2.11}
\Phi(\beta_2)\leq \Phi(\beta_1)-(\beta_2-\beta_1)m.
\end{align}
So  the map $\Phi$ is strictly  decreasing. 

According to  the possible values of $P_{inv}(\varphi,Q,\mathcal{C})$, we have the following three cases.
\begin{enumerate}
\item If   $P_{inv}(\varphi,Q,\mathcal{C})=0$, then $0$ is  exactly the unique root of the equation  $\Phi(\beta)=0$. 
\item  If   $P_{inv}(\varphi,Q,\mathcal{C})>0$,  taking $\beta_1=0$  and $\beta_2=h>0$ in (\ref{equ 2.11}),
then 
$$P_{inv}(\varphi-h \psi,Q,\mathcal{C})\leq P_{inv}(\varphi,Q,\mathcal{C})-hm.$$
The intermediate value theorem for  continuous function and the strict-decreasing   property    ensure that  the equation  $\Phi(\beta)=0$  has  the  unique root  $\beta^{*}$  satisfying $0<\beta^{*} \leq \frac{1}{m}P_{inv}(\varphi,Q,\mathcal{C})$. 
\item  If   $P_{inv}(\varphi,Q,\mathcal{C})<0$,  taking $\beta_1=h<0$  and $\beta_2=0$ in (\ref{equ 2.11}) again,
then 
$$P_{inv}(\varphi,Q,\mathcal{C})-hm\leq P_{inv}(\varphi-h\psi,Q,\mathcal{C}).$$
Similarly,  we  know  that  the equation $\Phi(\beta)=0$ has the  unique root  $\beta^{*}$  satisfying  
$ \frac{1}{m}P_{inv}(\varphi,Q,\mathcal{C})\leq\beta^{*}<0.$
\end{enumerate}
To sum up,  the equation  $\Phi(\beta)=0$ has a unique (finite) root.  

{Step 2.}  we show $P_{inv,\psi }(\varphi,Q,\C)$ is the unique  root of  the equation $\Phi(\beta)=0$.

Since $\Phi$ is   continuous and strictly decreasing,  we have
\begin{align}\label{equ 3.11}
\begin{split}
&\inf\{\beta\in \mathbb{R}:P_{inv}(\varphi-\beta \psi,Q,\mathcal{C})<0\}\\
=&\inf\{\beta\in \mathbb{R}:P_{inv}(\varphi-\beta \psi,Q,\mathcal{C})\leq0\}\\
=&\sup\{\beta\in \mathbb{R}:P_{inv}(\varphi-\beta \psi,Q,\mathcal{C})\geq0\}.
\end{split}
\end{align}
Hence it suffices to show
\begin{align}\label{equ 3.12}
P_{inv,\psi }(\varphi,Q,\C)=\inf\{\beta\in \mathbb{R}:P_{inv}(\varphi-\beta \psi,Q,\mathcal{C})\leq0\}.
\end{align}

We first show  
\begin{align}\label{inequ 2.12}
P_{inv,\psi }(\varphi,Q,\C)\geq\inf\{\beta\in \mathbb{R}:P_{inv}(\varphi-\beta \psi,Q,\mathcal{C})\leq0\}.
\end{align}
By Theorem \ref{thm 1.1.1}, we need to  show  for any 
$\beta \in  \{s \in \mathbb{R}: \limsup_{T \to \infty}\limits R_{inv,\psi, T}(\varphi-s\psi,Q,\C)<\infty\},$  one has
$$P_{inv}(\varphi-\beta \psi,Q,\mathcal{C})\leq0.$$
Let $M:=\limsup_{T \to \infty} \limits R_{inv,\psi, T}(\varphi-\beta\psi,Q,\C)$. Then  there exists $T_0\in \mathbb{N}$  so that for all $T\geq T_0$,  
$$R_{inv,\psi, T}(\varphi-\beta\psi,Q,\C)<M+1.$$
Using  the Definition \ref{def 2.1}, one can  choose a subsequence $\{n_j\}_{j \geq 1}$ that   converges  to  $\infty$ as $j \to \infty$  such that
\begin{align*}
P_{inv}(\varphi-\beta\psi,Q,\mathcal{C})=\lim_{j\to \infty }\frac{1}{n_j\tau}\log m(\varphi-\beta\psi,Q,n_j,\C).
\end{align*}
Fix  $T\geq T_0$. Then there exists  sufficiently large $n_j>T$ such that 
$S_{n_j\tau}\psi({\omega_{\C_{[0,n_j)}(x)}})>T\tau$ for all $x\in Q$. So $n_j \in G_T$ and $Y_{n_j}=Q$.  Let $F_{n_j}$  be  a  $(\C,n_j,Q)$-separated set. Then 
$$\sum_{x\in F_{n_j}}\limits e^{S_{n_j\tau}(\varphi -\beta \psi)(\omega_{\C_{[0,n_j)}(x)})}<M+1,$$
which yields that $m(\varphi-\beta\psi,Q,n_j,\C)\leq M+1$. This shows $P_{inv}(\varphi-\beta \psi,Q,\mathcal{C})\leq0.$

By   (\ref{equ 3.11}), we  continue to show
\begin{align}\label{inequ 2.13}
P_{inv,\psi }(\varphi,Q,\C)\leq\inf\{\beta\in \mathbb{R}:P_{inv}(\varphi-\beta \psi,Q,\mathcal{C})<0\}.
\end{align} 
Let $\beta \in \mathbb{R}$ with $P_{inv}(\varphi-\beta \psi,Q,\mathcal{C})=2a<0.$
Then there is $N_0$ such that for all $n\geq N_0$, 
$$\sup\left\{\sum_{x\in F_n}\limits e^{S_{n\tau}(\varphi-\beta \psi)({\omega_{\C_{[0,n)}(x)}})}: F_n ~\mbox{is a } (\C,n,Q)\mbox{-separated set}\right\}<e^{an\tau}.$$
Fix  sufficiently  large $T$ so that for each  $n\in G_T$  one has  $n\geq N_0$. Then   
\begin{align*}
R_{inv,\psi, T}(\varphi-\beta\psi,Q,\C)&\leq\sum_{n\geq N_0}\sup_{F_n}\sum_{x\in F_n}\limits e^{S_{n\tau}(\varphi-\beta \psi)({\omega_{\C_{[0,n)}(x)}})}\\
&<\sum_{n\geq N_0}e^{an\tau}<\infty,
\end{align*}
where $F_n$ is a  $ (\C,n,Q)$-separated set.
Hence $\limsup_{T \to \infty}R_{inv,\psi, T}(\varphi-\beta\psi,Q,\C)<\infty$ and 
\begin{align*}
\begin{split}
&\inf\{\beta\in \mathbb{R}:P_{inv}(\varphi-\beta \psi,Q,\mathcal{C})<0\}\\
\geq &\inf\{\beta \in \mathbb{R}: \limsup_{T \to \infty}R_{inv,\psi, T}(\varphi-\beta\psi,Q,\C)<\infty\}.\\
=&P_{inv,\psi }(\varphi,Q,\C) ,~~~~\text{by Theorem \ref{thm 1.1.1}}.
\end{split}
\end{align*}
By  inequalities (\ref{inequ 2.12}) and (\ref{inequ 2.13}),  we get (\ref{equ 3.12}).
\end{proof}

\section{Bowen's equation for  Pesin-Pitskel invariance pressure}
In this section, we  give the proofs of Theorem \ref{thm 1.2} in subsection 4.1 and Theorem \ref{thm 1.3} in subsection 4.2.
\subsection{Pesin-Pitskel invariance pressure on subsets} In this  subsection, we first recall the definition of Pesin-Pitskel invariance pressure on subsets \cite{zh19,z20},  and then introduce   a new  notion called  BS invariance dimension to establish Bowen's equation for Pesin-Pitskel  invariance pressure.

Let $Q$ be a  controlled  invariant set and  $\mathcal C =(\mathcal A, \tau,\nu)$ be an invariant partition of  $Q$. Let $\varphi\in C(U,\mathbb{R})$, $Z\subset Q, \lambda\in \R, N\in \N$.  Put
\begin{align} \label{equ 3.1}
M_{\C}(\varphi,Z,Q,\lambda,N)=\inf\left\{\sum_{i\in I}\limits  e^{-\lambda n_i \tau+\M}\right\},
\end{align}
where the infimum  is taken over all  finite or countable families $\{Q_{n_i}(x_i,\C)\}_{i\in I}$ such that $x_i \in Q$,  $n_i \geq N$ and  $\cup_{i\in I}Q_{n_i}(x_i,\C)\supset Z$.

Notice that the quantity $M_{\C}(\varphi,Z,Q,\lambda,N)$ is non-decreasing as $N$ increases. Then the limit
$$M_{\C}(\varphi,Z,Q,\lambda)=\lim_{N \to \infty }M_{\C}(\varphi,Z,Q,\lambda,N)$$
exists. It is easy  to check that  $M_{\C}(\varphi,Z,Q,\lambda)$  jumps from $\infty$ to $0$ at a critical value of the parameter $\lambda$.  
\begin{df}
The   Pesin-Pitskel invariance pressure of  $\varphi$ on $Z$ w.r.t. $\C$  is defined by  the critical value: 
\begin{align*}
P_{\C}(\varphi,Z,Q):&=\inf\{\lambda:M_{\C}(\varphi,Z,Q,\lambda)=0\}\\
&=\sup\{\lambda:M_{\C}(\varphi,Z,Q,\lambda)=\infty\}.
\end{align*}
The Bowen invariance entropy  of  $Z$ w.r.t.  $\C$  \cite{hz18,whs19} is given  by ${\rm dim}_{\C}(Z,Q):=P_{\C}(0,Z,Q)$.
\end{df}
\begin{rem}
Actually, Pesin-Pitskel invariance pressure  can be alternatively defined  by using  admissible words \cite{zh19,z20}. Put 
$$\Lambda_{\C}(\varphi,Z,Q,\lambda,N)=\inf_{\mathcal G}\left\{\sum_{i\in I}\limits  e^{-\lambda l(s) \tau+S_{l(s)\tau}\varphi(\omega_s)}\right\},$$
where the infimum  is taken over all  finite or countable  admissible words   such that  $l(s)\geq N$ and  $\cup_{s\in \mathcal{G}}\C_s(Q)\supset Z$.

Let $\Lambda_{\C}(\varphi,Z,Q,\lambda)=\lim_{N \to \infty }\Lambda_{\C}(\varphi,Z,Q,\lambda,N)$. We  define 
\begin{align*}
P_{\C}^{*}(\varphi,Z,Q)&=\inf\{\lambda:\Lambda_{\C}(\varphi,Z,Q,\lambda)=0\}\\
&=\sup\{\lambda:\Lambda_{\C}(\varphi,Z,Q,\lambda)=\infty\}.
\end{align*}

Using the facts  $ \mathcal{C}_{\omega_{\mathcal C_{[0,n)}(x)}}(Q)=Q_n(x, \mathcal C)$ for each $x\in Q$, and  for each   admissible word $s$ with length $n$,  one has $Q_n(x, \mathcal C)=\mathcal{C}_s(Q)$  since $\C_{[0,n)}(x)=s$ for all $x\in \mathcal{C}_s(Q)$ again, it is  readily shown that  $P_{inv}^{*}(\varphi,Q,\mathcal{C})=P_{inv}(\varphi,Q,\mathcal{C})$.
\end{rem}

Fix a non-empty subset $Z\subset Q$,   an invariant partition $\mathcal C =(\mathcal A, \tau,\nu)$  of  $Q$ and $\varphi \in C(U,\R)$.  We  investigate the existence and uniqueness of  the root of  the equation  associated with   Pesin-Pitskel invariance pressure:
$$\Phi(t)=P_{\C}(t\varphi,Z,Q)=0.$$

\begin{prop}\label{prop 3.3}
Let $\varphi\in C(U,\mathbb{R})$ with $\varphi<0$. Then  the equation $\Phi(t)=0$  has  a unique (finite) root $t^{*}$ satisfying
$$-\frac{1}{m}  {\rm dim}_{\C}(Z,Q) \leq t^{*} \leq -\frac{1}{M}{\rm dim}_{\C}(Z,Q),$$
where $m=\min_{u\in U}\varphi(x)$ and $M=\max_{u\in U}\varphi(x)$. 
\end{prop}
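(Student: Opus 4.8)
The plan is to treat $\Phi(t)=P_{\C}(t\varphi,Z,Q)$ as a function of $t\in\mathbb R$, show it is finite, continuous and strictly decreasing, and then pin down its unique zero between two explicit bounds coming from $\dim_{\C}(Z,Q)=\Phi(0)$. Throughout I write $m=\min_{u\in U}\varphi(u)$ and $M=\max_{u\in U}\varphi(u)$; since $U$ is compact and $\varphi<0$ we have $m\le M<0$, and the strict negativity of $M$ is what will force the strict monotonicity.

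The first step is to record elementary comparison inequalities for the Carath\'eodory--Pesin sums in (\ref{equ 3.1}). For a cylindrical set $Q_n(x,\C)$ and $t_1<t_2$,
\[
S_{n\tau}(t_2\varphi)(\omega_{\C_{[0,n)}(x)})=S_{n\tau}(t_1\varphi)(\omega_{\C_{[0,n)}(x)})+S_{n\tau}((t_2-t_1)\varphi)(\omega_{\C_{[0,n)}(x)}),
\]
and the last summand lies between $(t_2-t_1)m\,n\tau$ and $(t_2-t_1)M\,n\tau$. Substituting these bounds into $M_{\C}(t\varphi,Z,Q,\lambda,N)$ and comparing admissible covers term by term gives
\[
M_{\C}(t_2\varphi,Z,Q,\lambda,N)\le M_{\C}\big(t_1\varphi,Z,Q,\lambda-(t_2-t_1)M,N\big)
\]
together with the reverse inequality with $m$ in place of $M$. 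Letting $N\to\infty$ and reading off the critical values of $\lambda$ yields
\[
P_{\C}(t_1\varphi,Z,Q)+(t_2-t_1)m\le P_{\C}(t_2\varphi,Z,Q)\le P_{\C}(t_1\varphi,Z,Q)+(t_2-t_1)M.
\]
The right-hand inequality and $M<0$ show $\Phi$ is strictly decreasing; the two inequalities together give $|\Phi(t_2)-\Phi(t_1)|\le\max\{|m|,|M|\}\,|t_2-t_1|$, so $\Phi$ is Lipschitz, hence continuous. Finiteness of $\Phi$ reduces to finiteness of $\dim_{\C}(Z,Q)$: the length-$n$ cylinders partition $Q$ into at most $(\#\mathcal A)^n$ pieces, so covering $Z$ by them gives $\dim_{\C}(Z,Q)\le\tau^{-1}\log\#\mathcal A<\infty$; and since $Z\neq\emptyset$, for $\lambda<0$ every admissible cover of $Z$ contains a term $e^{-\lambda n_i\tau}\ge e^{-\lambda N\tau}\to\infty$, so $\dim_{\C}(Z,Q)\ge 0$.

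It remains to locate the root. Taking $t_1=0$ and $t_2=t\ge 0$ above gives $\dim_{\C}(Z,Q)+tm\le\Phi(t)\le\dim_{\C}(Z,Q)+tM$. Since $\dim_{\C}(Z,Q)\ge 0$ and $m\le M<0$, the points $t_-:=-\tfrac1m\dim_{\C}(Z,Q)$ and $t_+:=-\tfrac1M\dim_{\C}(Z,Q)$ are nonnegative with $t_-\le t_+$, and the bounds give $\Phi(t_-)\ge 0\ge\Phi(t_+)$. By the intermediate value theorem $\Phi$ vanishes somewhere on $[t_-,t_+]$, and by strict monotonicity this zero $t^{*}$ is unique; when $\dim_{\C}(Z,Q)=0$ the interval collapses to $\{0\}$ and $t^{*}=0$. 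I do not anticipate a serious obstacle: this is the standard Bowen's-equation scheme, and once the comparison inequalities are in place the rest is bookkeeping. The only delicate points are tracking signs (here $\varphi<0$ makes $t\mapsto t\varphi$ decreasing, and the strict inequality survives precisely because $M<0$) and the preliminary check that $\dim_{\C}(Z,Q)$ is finite and nonnegative so the bracketing interval is well defined.
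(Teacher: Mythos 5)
Your proof is correct and takes essentially the same route as the paper: establish the comparison inequalities $\Phi(t_1)+(t_2-t_1)m\le\Phi(t_2)\le\Phi(t_1)+(t_2-t_1)M$ from the termwise bounds on the Carath\'eodory--Pesin sums, deduce Lipschitz continuity and strict decrease (using $M<0$), and then apply the intermediate value theorem with $t_1=0$ to bracket the unique root between $-\tfrac1m\dim_{\C}(Z,Q)$ and $-\tfrac1M\dim_{\C}(Z,Q)$. The only cosmetic difference is that the paper proves finiteness of $P_{\C}(\varphi,Z,Q)$ directly via $-\|\varphi\|\le P_{\C}(\varphi,Z,Q)\le\tau^{-1}\log\#\mathcal A+\|\varphi\|$, whereas you bound $\dim_{\C}(Z,Q)=\Phi(0)$ and propagate finiteness along the real line via the Lipschitz estimate; both are sound.
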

\begin{proof}
We show $P_{\C}(\varphi,Z,Q)$ is finite for any $\varphi \in C(U,\R).$   Let  $\varphi \in C(U,\R)$. Then  for any $x\in Q$ and $n\in N$,  one has 
$|\S|\leq n\tau||\varphi||$.
 Since
\begin{align*}
M_{\C}(\varphi,Z,Q,-||\varphi||,N)&=\inf\left\{\sum_{i\in I}\limits  e^{||\varphi||n_i\tau+\cdot\M}\right \}\\
&\geq \inf\left\{\sum_{i\in I}\limits  e^{||\varphi||n_i\tau-n_i\tau||\varphi||}\right\}>1,
\end{align*}
where the infimum  is taken over all  finite or countable families $\{Q_{n_i}(x_i,\C)\}_{i\in I}$ such that $x_i \in Q$,  $n_i \geq N$ and  $\cup_{i\in I}Q_{n_i}(x_i,\C)\supset Z$,  this  implies that $$P_{\C}(\varphi,Z,Q)\geq-||\varphi||>-\infty.$$ 

On  the other hand,    fix a   cover  $\{Q_N(x_i,\C)\}_{i\in I}$ of $Z$ whose cyclinder  sets  with  length $N $ are  at most   $(\#\mathcal{A})^N$. Then   
\begin{align*}
M_{\C}(\varphi,Z,Q,\frac{\log \mathcal{\#A}}{\tau}+||\varphi||,N)\leq\sum_{i\in I}\limits  e^{-(\frac{\log \mathcal{\#A}}{\tau}+||\varphi||)N\tau+N\tau||\varphi||}\leq 1,
\end{align*}
which implies that $P_{\C}(\varphi,Z,Q)\leq  \frac{\log \mathcal{\#A}}{\tau}+||\varphi||<\infty.$

We show $\Phi$ is  a continuous and strictly decreasing. Let $t_1, t_2\in \mathbb{R}$ with $t_1>t_2$.  Given  $N\in \mathbb{N}$  and   a  family  $\{Q_{n_i}(x_i,\C)\}_{i\in I}$  of cyclinder  sets such that $x_i \in Q$,  $n_i \geq N$ and  $\cup_{i\in I}Q_{n_i}(x_i,\C)\supset Z$, one  has
\begin{align*}
&\sum_{i\in I}\limits   e^{-n_i \lambda\tau+t_2\M+(t_1-t_2)n_im\tau}\\
\leq&\sum_{i\in I}\limits  e^{-n_i \lambda\tau+t_1\M}\\
\leq&\sum_{i\in I}\limits   e^{-n_i \lambda\tau+t_2\M+(t_1-t_2)n_iM\tau}.
\end{align*}
This implies that 
\begin{align}\label{4.1}
P_{\C}(t_1\varphi,Z,Q)\leq P_{\C}(t_2\varphi,Z,Q)+(t_1-t_2)M
\end{align}
and
\begin{align}\label{4.2}
P_{\C}(t_2\varphi,Z,Q)+(t_1-t_2)m \leq P_{\C}(t_1\varphi,Z,Q).
\end{align}
So $\Phi$ is  a continuous and strictly decreasing function on $\mathbb{R}$.

Putting  $t_1=h>0$ and $ t_2=0$ in (\ref{4.1}) and noting that  $P_{\C}(0\cdot\varphi,Z,Q)={\rm dim}_{\C}(Z,Q)\geq 0$, we have 
$$P_{\C}(h\varphi,Z,Q)\leq {\rm dim}_{\C}(Z,Q)-h(-M).$$
Again, putting  $t_1=h>0, t_2=0$ in (\ref{4.2}), we have 
$$P_{\C}(h\varphi,Z,Q)\geq {\rm dim}_{\C}(Z,Q)-h(-m).$$
Then the equation $\Phi(t)=0$ has a unique root  $t^{*}$ satisfying  $$-\frac{1}{m}  {\rm dim}_{\C}(Z,Q) \leq t^{*} \leq -\frac{1}{M}{\rm dim}_{\C}(Z,Q).$$
\end{proof}

BS dimension was originally introduced by Barreira and Schmeling  \cite{bs00} to  find the root of the  equation defined by topological pressure of additive  potentials in topological dynamical systems.  In the framework of control systems, we borrow the ideas   used in \cite{bs00,hz18} to define BS invariance dimension on subsets of partitions. The new notion  allows us to obtain the precise root of the equation $\Phi(t)=0$   considered in Proposition \ref{prop 3.3}. 

Let $Q$ be a  controlled  invariant set and  $\mathcal C =(\mathcal A, \tau,\nu)$ be an invariant partition of  $Q$, and let $\varphi\in C(U,\mathbb{R})$ with $\varphi >0$, $Z\subset Q, \lambda\in \R, N\in \N$.  Define 
$$R_{\C}(\varphi,Z,Q,\lambda,N)=\inf\left\{\sum_{i\in I}\limits  e^{-\lambda\M}\right\},$$
where the infimum  is taken over all  finite or countable families $\{Q_{n_i}(x_i,\C)\}_{i\in I}$ such that $x_i \in Q$,  $n_i \geq N$ and  $\cup_{i\in I}Q_{n_i}(x_i,\C)\supset Z$.

It is known  that  Hausdorff   dimension is defined by  geometric radius, that is, the diameter  $|U|$ of  open set $U$.   However, in  topological dynamical systems,   geometric radius  is replaced by the dynamical radius  $e^{-n_i}$ of  Bowen open ball $B_n(x,\epsilon)$  to introduce some  entropy-like quantities. In the case of control systems, we   expect that different ``open balls"  with the same time $n$, but may have different  radius. Therefore, the positive continuous function $\varphi$ plays the certain role by giving  the weight $  e^{-\M}$  for each  different point $x_i$ with the time $n_i$.

Since $R_{\C}(\varphi,Z,Q,\lambda,N)$ is non-decreasing  as $N$ increases, so the limit
$$R_{\C}(\varphi,Z,Q,\lambda)=\lim_{N\to \infty}R_{\C}(\varphi,Z,Q,\lambda,N).$$
exists.
\begin{prop}\label{prop 4.5}
	\begin{enumerate}
		\item If  $R_{\C}(\varphi,Z,Q,\lambda_0,N)<\infty$ for some $\lambda_0$, then  $R_{\C}(\varphi,Z,Q,\lambda,N)=0$ for all $\lambda>\lambda_0$.
		\item If $R_{\C}(\varphi,Z,Q,\lambda_0,N)=\infty$ for some $\lambda_0$, then  $R_{\C}(\varphi,Z,Q,\lambda,N)=\infty$ for all $\lambda<\lambda_0$.
	\end{enumerate}
\end{prop}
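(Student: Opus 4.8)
The heart of the matter is one elementary comparison between the Carath\'eodory weights at two levels $\lambda_0<\lambda$, and the standing assumption $\varphi>0$ is exactly what makes it work. Write $m:=\min_{u\in U}\varphi(u)>0$. For any admissible covering family $\{Q_{n_i}(x_i,\C)\}_{i\in I}$ for $R_{\C}(\varphi,Z,Q,\cdot,N)$ one has $n_i\geq N$, hence each weight exponent satisfies
\[
\M=\sum_{j=0}^{n_i-1}S_{\tau}\varphi(\omega_{\C_j(x_i)})\;\geq\;n_i\tau m\;\geq\;N\tau m\;>\;0 .
\]
Therefore, whenever $\lambda>\lambda_0$,
\[
e^{-\lambda\M}\;=\;e^{-(\lambda-\lambda_0)\M}\,e^{-\lambda_0\M}\;\leq\;e^{-(\lambda-\lambda_0)N\tau m}\,e^{-\lambda_0\M}.
\]
Summing over $i\in I$ and taking the infimum over all such families yields
\begin{equation*}
R_{\C}(\varphi,Z,Q,\lambda,N)\;\leq\;e^{-(\lambda-\lambda_0)N\tau m}\,R_{\C}(\varphi,Z,Q,\lambda_0,N),\qquad \lambda>\lambda_0, \tag{$\ast$}
\end{equation*}
which is the engine for both parts.

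Part (2) then follows at once: if $\lambda<\lambda_0$, apply $(\ast)$ with the two levels interchanged to get $R_{\C}(\varphi,Z,Q,\lambda_0,N)\leq e^{-(\lambda_0-\lambda)N\tau m}R_{\C}(\varphi,Z,Q,\lambda,N)$, so $R_{\C}(\varphi,Z,Q,\lambda_0,N)=\infty$ forces $R_{\C}(\varphi,Z,Q,\lambda,N)=\infty$. For part (1), $(\ast)$ immediately gives $R_{\C}(\varphi,Z,Q,\lambda,N)<\infty$ for every $\lambda>\lambda_0$; the remaining point is to upgrade finiteness to the value $0$. For this I would subdivide: each $Q_{n_i}(x_i,\C)$ splits into the at most $\#\mathcal{A}$ cylinders $Q_{n_i+1}(\cdot,\C)$ it contains, and since every extra time step enlarges the exponent $\M$ by at least $\tau m$, it multiplies the corresponding weight by a factor $\leq \#\mathcal{A}\cdot e^{-\lambda\tau m}$. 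Iterating the subdivision $k$ times multiplies the $\lambda$-sum of an almost-optimal family by at most $(\#\mathcal{A}\,e^{-\lambda\tau m})^{k}$, which tends to $0$ as $k\to\infty$ once $\lambda>(\log\#\mathcal{A})/\tau m$; since a coarse direct estimate (as in Proposition~\ref{prop 3.3}, using covers by all cylinders of a fixed length) shows the critical value of $\lambda$ is $\leq(\log\#\mathcal{A})/\tau m$, this regime covers all $\lambda$ strictly above the critical value, and in particular the limit quantity $R_{\C}(\varphi,Z,Q,\lambda)=\lim_{N}R_{\C}(\varphi,Z,Q,\lambda,N)$ drops from $\infty$ to $0$ at that single value.

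The main obstacle is precisely this passage from ``finite'' to ``$0$'' in (1): the inequality $(\ast)$ alone only propagates finiteness, and one must either subdivide the near-optimal cover until its weights become uniformly summably small, or equivalently let the scale $N\to\infty$ while controlling the growth of $R_{\C}(\varphi,Z,Q,\lambda_0,N)$ by the subdivision count. Everywhere the positivity $\varphi>0$ is indispensable: it is what gives the lower bound $\M\geq N\tau m>0$, allowing a change of level to be traded for an exponential gain (and making the subdivision factor strictly smaller than $1$). Otherwise the argument is formally parallel to the corresponding dichotomy for the Pesin--Pitskel functional $M_{\C}(\varphi,Z,Q,\lambda,N)$, with the additive normalization $-\lambda n_i\tau$ there replaced by the multiplicative weight $e^{-\lambda\M}$ here.
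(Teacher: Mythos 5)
Your inequality $(\ast)$ is exactly the inequality at the heart of the paper's proof, and your treatment of part (2) is fine. But your handling of part (1) misreads what $(\ast)$ already gives. You say that $(\ast)$ ``only propagates finiteness'' and then resort to a subdivision argument, but the prefactor $e^{-(\lambda-\lambda_0)N\tau m}$ in $(\ast)$ tends to $0$ as $N\to\infty$; since $R_{\C}(\varphi,Z,Q,\lambda_0,N)\leq R_{\C}(\varphi,Z,Q,\lambda_0)<\infty$ for every $N$, passing to the limit in $N$ in $(\ast)$ gives $R_{\C}(\varphi,Z,Q,\lambda)=0$ outright, and then $R_{\C}(\varphi,Z,Q,\lambda,N)=0$ for each $N$ by the monotonicity in $N$ recorded just before the proposition. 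That one-line limit is precisely what the paper does (``Letting $N\to\infty$ gives us the desired result''); the $N$-dependence of the prefactor was the whole point of keeping the lower bound $\M\geq N\tau m$ rather than the weaker $\M>0$.

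The subdivision detour you substitute for this is both unnecessary and incomplete. Refining a cover one time step multiplies its $\lambda$-sum by at most $\#\mathcal{A}\,e^{-\lambda\tau m}$, which is a contraction only when $\lambda>(\log\#\mathcal{A})/(\tau m)$ (and, incidentally, requires $\lambda\geq 0$ to turn the increment of $\M$ into a decrease of the weight). The coarse estimate you cite only bounds the critical value above by $(\log\#\mathcal{A})/(\tau m)$; it does not say the two coincide. So your argument produces $R_{\C}=0$ only for $\lambda$ strictly above that crude threshold and says nothing on the interval between the actual critical value and $(\log\#\mathcal{A})/(\tau m)$, whereas the proposition requires the conclusion for \emph{every} $\lambda>\lambda_0$. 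The gap disappears once you simply let $N\to\infty$ in $(\ast)$.
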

\begin{proof}
	It suffices to  show (1). Let $N\in \mathbb{N}$ and $\lambda >\lambda_0$.  Given   a  family   $\{Q_{n_i}(x_i,\C)\}_{i\in I}$ of cyclinder  sets such that $x_i \in Q$,  $n_i \geq N$ and  $\cup_{i\in I }Q_{n_i}(x_i,\C)\supset Z$,   we have 
	\begin{align*}
		&\sum_{i\in I}\limits   e^{-\lambda \M }\\
		=&\sum_{i\in I}\limits  e^{-\lambda_0\M+(\lambda_0-\lambda)\M}\\
		\leq&\sum_{i\in I}\limits   e^{-\lambda_0\M+(\lambda_0-\lambda)Nm\tau},
	\end{align*}
	where $m=\min_{u\in U}\varphi(u)>0$. This shows that  $R_{\C}(\varphi,Z,Q,\lambda,N)\leq R_{\C}(\varphi,Z,Q,\lambda_0)e^{(\lambda_0-\lambda)Nm\tau}$. Letting $N \to \infty$ gives us  the desired result.
\end{proof}
Proposition \ref{prop 4.5} tells us that  there is a critical value of the parameter $\lambda$  such  that  $R_{\C}(\varphi,Z,Q,\lambda)$ jumps from $\infty$ to 0.  

\begin{df}
The BS  invariance dimension   of $\varphi$ on $Z$ w.r.t. $\C$ is  defined  by 
\begin{align*}
{\rm dim}_{\C}^{BS}(\varphi,Z,Q):&=\inf\{\lambda: R_{\C}(\varphi,Z,Q,\lambda)=0\},\\
&=\sup\{\lambda:R_{\C}(\varphi,Z,Q,\lambda)=\infty\}.
\end{align*}
\end{df}

\begin{prop}
\begin{enumerate}
\item  If $\varphi =1$, then  ${\rm dim}_{\C}^{BS}(1,K,Q,)={\rm dim}_{\C}(K,Q)$.
\item If $Z_1\subset Z_2\subset Q$, then  $0\leq {\rm dim}_{\C}^{BS}(\varphi,Z_1,Q,\C)\leq {\rm dim}_{\C}^{BS}(\varphi,Z_2,Q,\C)$.
\item If $\lambda \geq 0$ and  $Z, Z_i\subset Q, i\geq1$  with $Z=\cup_{i\geq 1}Z_i$, then  $$R_{\C}(\varphi,Z,Q,\lambda)\leq \sum_{i\geq1}\limits R_{\C}(\varphi,Z_i,Q,\lambda)$$
and 
${\rm dim}_{\C}^{BS}(\varphi,Z,Q)=\sup_{i\geq 1}{\rm dim}_{\C}^{BS}(\varphi,Z_i,Q).$
\end{enumerate}
\end{prop}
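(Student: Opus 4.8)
The plan is to handle the three statements one at a time; each reduces to unwinding the definition of $R_{\C}(\varphi,Z,Q,\lambda,N)$ and invoking Proposition \ref{prop 4.5} to compare critical values. For (1), I would observe that $S_{n\tau}(1)(\omega_{\C_{[0,n)}(x)})=n\tau$ for every $x\in Q$ and every $n$, so that, term by term and over the same families of cylindrical sets, $R_{\C}(1,K,Q,\lambda,N)=\inf\{\sum_{i\in I}e^{-\lambda n_i\tau}\}=M_{\C}(0,K,Q,\lambda,N)$. Letting $N\to\infty$ gives $R_{\C}(1,K,Q,\lambda)=M_{\C}(0,K,Q,\lambda)$ for every $\lambda$, so the two functions jump from $\infty$ to $0$ at the same value of $\lambda$, whence ${\rm dim}_{\C}^{BS}(1,K,Q)=P_{\C}(0,K,Q)={\rm dim}_{\C}(K,Q)$.

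For (2), the monotonicity in the subset is clear: any countable family of cylindrical sets covering $Z_2$ also covers $Z_1$ (the constraints $x_i\in Q$, $n_i\ge N$ do not involve $Z$), so $R_{\C}(\varphi,Z_1,Q,\lambda,N)\le R_{\C}(\varphi,Z_2,Q,\lambda,N)$; passing to the limit in $N$ and comparing critical values (if $\lambda$ exceeds the dimension of $Z_2$ then $R_{\C}(\varphi,Z_2,Q,\lambda)=0$, hence $R_{\C}(\varphi,Z_1,Q,\lambda)=0$) gives ${\rm dim}_{\C}^{BS}(\varphi,Z_1,Q)\le{\rm dim}_{\C}^{BS}(\varphi,Z_2,Q)$. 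For the bound ${\rm dim}_{\C}^{BS}(\varphi,Z,Q)\ge0$, I would use $\varphi>0$: writing $m=\min_{u\in U}\varphi(u)>0$ and taking any $\lambda\le0$, every cover $\{Q_{n_i}(x_i,\C)\}_{i\in I}$ of the non-empty set $Z$ with $n_i\ge N$ satisfies $\sum_{i\in I}e^{-\lambda S_{n_i\tau}\varphi(\omega_{\C_{[0,n_i)}(x_i)})}\ge e^{-\lambda Nm\tau}\ge1$, so $R_{\C}(\varphi,Z,Q,\lambda)\ge1>0$; by the jump behaviour of Proposition \ref{prop 4.5} this forces $\lambda\le{\rm dim}_{\C}^{BS}(\varphi,Z,Q)$, and letting $\lambda\uparrow0$ finishes.

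For (3), I would first establish the countable subadditivity of $Z\mapsto R_{\C}(\varphi,Z,Q,\lambda)$ (which we may assume finite on each $Z_i$, else the inequality is trivial). Fix $N$ and $\varepsilon>0$, choose for each $i$ a cover $\{Q_{n_j}(x_j,\C)\}_{j\in I_i}$ of $Z_i$ with $n_j\ge N$ and $\sum_{j\in I_i}e^{-\lambda S_{n_j\tau}\varphi(\omega_{\C_{[0,n_j)}(x_j)})}\le R_{\C}(\varphi,Z_i,Q,\lambda,N)+\varepsilon2^{-i}$; the union $\bigcup_i\{Q_{n_j}(x_j,\C)\}_{j\in I_i}$ covers $Z$, so $R_{\C}(\varphi,Z,Q,\lambda,N)\le\sum_{i\ge1}R_{\C}(\varphi,Z_i,Q,\lambda,N)+\varepsilon\le\sum_{i\ge1}R_{\C}(\varphi,Z_i,Q,\lambda)+\varepsilon$; letting $\varepsilon\to0$ and then $N\to\infty$ yields the first displayed inequality. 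For the dimension identity, $\sup_{i\ge1}{\rm dim}_{\C}^{BS}(\varphi,Z_i,Q)\le{\rm dim}_{\C}^{BS}(\varphi,Z,Q)$ is immediate from (2); conversely, with $s:=\sup_{i\ge1}{\rm dim}_{\C}^{BS}(\varphi,Z_i,Q)$ and any $\lambda>s$ we have $R_{\C}(\varphi,Z_i,Q,\lambda)=0$ for every $i$, so the subadditivity gives $R_{\C}(\varphi,Z,Q,\lambda)=0$, hence $\lambda\ge{\rm dim}_{\C}^{BS}(\varphi,Z,Q)$; letting $\lambda\downarrow s$ completes the argument.

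None of the three items presents a genuine obstacle; the only point demanding a little care is the last one, where one must interleave the $\varepsilon$-optimal covers at a common cutoff $N$ and take the limits in $\varepsilon$ and in $N$ in the correct order, and where the hypothesis $\lambda\ge0$ (together with item (2)) is what places the relevant parameters in the regime where all the $R_{\C}(\varphi,Z_i,Q,\lambda)$ already vanish.
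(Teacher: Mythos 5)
The paper states this proposition without proof, so there is no official argument to compare against; your task is therefore to supply the standard Carath\'eodory--Pesin arguments, and you do so correctly. Item (1) is exactly the observation $R_{\C}(1,K,Q,\lambda,N)=M_{\C}(0,K,Q,\lambda,N)$; item (2) uses that covers of $Z_2$ are covers of $Z_1$ and that $\varphi\geq m>0$ forces every sum to exceed $e^{-\lambda Nm\tau}\geq 1$ once $\lambda\leq 0$ and $Z_1\neq\emptyset$ (a hypothesis the paper carries throughout, e.g.\ in Theorem~\ref{thm 1.2}); item (3) is the usual $\varepsilon 2^{-i}$ interleaving of near-optimal covers at a common cutoff $N$, which is valid because $R_{\C}(\varphi,Z_i,Q,\lambda,N)<\infty$ for each fixed $N$ (cover $Q$ by the at most $(\#\mathcal A)^N$ cylinders of length $N$). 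One small remark: your closing sentence overstates the role of the hypothesis $\lambda\geq 0$ --- the subadditivity $R_{\C}(\varphi,Z,Q,\lambda)\leq\sum_i R_{\C}(\varphi,Z_i,Q,\lambda)$ as you prove it holds for every real $\lambda$; and in the second half of (3) the needed parameters $\lambda>s$ are automatically nonnegative because $s\geq 0$ by item (2), so $\lambda\geq 0$ is never an extra constraint. This is a cosmetic point, not a gap.
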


We   are ready to give the proof of Theorem  \ref{thm 1.2}.
\begin{proof}[Proof of   Theorem \ref{thm 1.2}]
By definitions,  for each $N$ one has
$$R_{\C}(\varphi,Z,Q,\lambda,N)=M_{\C}(-\lambda\varphi,Z,Q,0,N)$$
and hence
$R_{\C}(\varphi,Z,Q,\lambda)=M_{\C}(-\lambda\varphi,Z,Q,0).$

Let $\lambda >{\rm dim}_{\C}^{BS}(\varphi,Z,Q)$. Then 
$M_{\C}(-\lambda\varphi,Z,Q,0)=R_{\C}(\varphi,Z,Q,\lambda)<1.$ So  $P_{\C}(-\lambda\varphi,Z,Q)\leq 0$.  By the proof of Proposition \ref{prop 3.3}, the continuity of $\Phi$ gives us 
$$P_{\C}(-{\rm dim}_{\C}^{BS}(\varphi,Z,Q)\cdot \varphi,Z,Q)\leq 0$$
by letting $\lambda  \to {\rm dim}_{\C}^{BS}(\varphi,Z,Q)$.  One  similarly  obtain that  $$P_{\C}(-{\rm dim}_{\C}^{BS}(\varphi,Z,Q)\cdot \varphi,Z,Q)\geq0.$$
Together with  Proposition \ref{prop 3.3},   ${\rm dim}_{\C}^{BS}(\varphi,Z,Q)$  is  the unique root of the equation $\Phi(t)=0$.
\end{proof}

The following corollary shows  the BS invariance dimension of $\psi$  on $Q$ w.r.t. $\C$ is a special case of $\psi$-induced invariance pressure of 0 on  $Q$  w.r.t. $\C$.

\begin{cor}
Let $\mathcal C =(\mathcal A, \tau,\nu)$ be an  invariant partition of  $Q$, and  $\psi\in C(U,\mathbb{R})$ with $\psi>0$.  If
the infimum in Eq. (\ref{equ 3.1})  is taken over all  finite families, then
$${\rm dim}_{\C}^{BS}(\psi,Q,Q)=P_{inv,\psi }(0,Q,\C).$$
\end{cor}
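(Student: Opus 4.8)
The plan is to show that when the infimum in \eqref{equ 3.1} is restricted to \emph{finite} families, the dimensional quantity ${\rm dim}_{\C}^{BS}(\psi,Q,Q)$ coincides with the induced invariance pressure $P_{inv,\psi}(0,Q,\C)$, which by Theorem \ref{thm 1.1.1} equals $\inf\{\beta\in\mathbb{R}:\limsup_{T\to\infty}R_{inv,\psi,T}(-\beta\psi,Q,\C)<\infty\}$. I would set $\varphi=0$ throughout, so the relevant weights in the spanning/separated-set sums become simply $e^{-\beta S_{n\tau}\psi(\omega_{\C_{[0,n)}(x)})}$, and observe that this is exactly the form of summand appearing in the definition of $R_{\C}(\psi,Q,Q,\lambda,N)$ after the substitution $\lambda=\beta$ — the identity $R_{\C}(\psi,Z,Q,\lambda,N)=M_{\C}(-\lambda\psi,Z,Q,0,N)$ already noted in the proof of Theorem \ref{thm 1.2} is the bridge. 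By the definition of BS invariance dimension, ${\rm dim}_{\C}^{BS}(\psi,Q,Q)$ is the critical $\lambda$ at which $R_{\C}(\psi,Q,Q,\lambda)=\lim_N R_{\C}(\psi,Q,Q,\lambda,N)$ jumps from $\infty$ to $0$, so it suffices to compare this critical value with $\inf\{\beta:\limsup_T R_{inv,\psi,T}(-\beta\psi,Q,\C)<\infty\}$.

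The key steps, in order: First I would unwind $R_{inv,\psi,T}(-\beta\psi,Q,\C)$ for $\varphi=0$: it is the supremum over $n\in G_T$ and $(\C,n,Y_n)$-separated sets $F_n'$ of $\sum_{n\in G_T}\sum_{x\in F_n'}e^{-\beta S_{n\tau}\psi(\omega_{\C_{[0,n)}(x)})}$, where $Y_n=\{x\in Q:S_{n\tau}\psi(\omega_{\C_{[0,n)}(x)})>T\tau\}$. Because the partition $\mathcal A$ is finite, each $Y_n$ is a finite union of cylinders, and a maximal $(\C,n,Y_n)$-separated set corresponds bijectively to the distinct cylinders $Q_n(x,\C)$ meeting $Y_n$; thus the inner sum is literally a sum over a finite family of cylinders of length $n$ covering $Y_n$, with weight $e^{-\beta S_{n\tau}\psi}$. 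Summing over $n\in G_T$ then amounts to covering $Z=Q$ (since every point of $Q$ lies in some $Y_n$ once $T$ is fixed and $n$ large enough) by finitely many cylinders of lengths $\geq$ roughly $T/\|\psi\|$, which is precisely the kind of family allowed in $R_{\C}(\psi,Q,Q,\beta,N)$ with $N\approx T/\|\psi\|$ — here the restriction to finite families in the hypothesis is exactly what makes this matching exact rather than merely an inequality. Second, I would make this correspondence quantitative: show $R_{inv,\psi,T}(-\beta\psi,Q,\C)$ and $R_{\C}(\psi,Q,Q,\beta,\lceil T/\|\psi\|\rceil)$ are comparable up to the bounded multiplicative distortion coming from the discrepancy between $T$ and the actual partial sums $S_{n\tau}\psi$ (controlled by $\|\psi\|\tau$ as in inequality \eqref{equ 2.4} and \eqref{equ 2.6}), and that $G_T$-sums versus the infimum over covers differ only by whether one takes sup (over separated sets, i.e.\ maximal covers) or inf (over all covers), which does not affect finiteness/vanishing of the limit. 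Third, I would conclude: $\limsup_T R_{inv,\psi,T}(-\beta\psi,Q,\C)<\infty$ iff $R_{\C}(\psi,Q,Q,\beta)<\infty$ iff (by Proposition \ref{prop 4.5}) $\beta\geq{\rm dim}_{\C}^{BS}(\psi,Q,Q)$ or $\beta$ is at the critical value, so the two infima agree, and Theorem \ref{thm 1.1.1} finishes the identification with $P_{inv,\psi}(0,Q,\C)$.

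The main obstacle I anticipate is the mismatch between the two kinds of ``cover'': in $R_{inv,\psi,T}$ one sums over a \emph{single scale} $T$ but the cylinders appearing have a whole \emph{range} of lengths $n\in G_T$ (all $n$ with some point having $S_{n\tau}\psi>T\tau$), whereas in $R_{\C}(\psi,Q,Q,\beta,N)$ one freely mixes cylinders of all lengths $\geq N$. I would handle this by noting that $G_T=[N_T,\infty)\cap\mathbb N$ is a \emph{tail}: if $S_{n\tau}\psi(\omega_{\C_{[0,n)}(x)})>T\tau$ for some $x$ then the same holds for all longer cylinders refining $Q_n(x,\C)$, and conversely for $n$ just above $T/\|\psi\|$ every point qualifies; so $G_T$-families are exactly the families with lengths $\geq N_T$ where $N_T\to\infty$ as $T\to\infty$, matching the $N\to\infty$ limit defining $R_{\C}$. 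A secondary subtlety is that $R_{inv,\psi,T}$ uses a supremum (largest separated set) while $R_{\C}$ uses an infimum over covers; here I would invoke the standard spanning-vs-separated comparison (as in Proposition \ref{poro 3.4}: a maximal separated set is spanning) to show the sup- and inf-versions have the same critical exponent, since for $\varphi=0$ the weights $e^{-\beta S_{n\tau}\psi}$ on a cylinder do not depend on the choice of representative point $x_i$ within that cylinder.
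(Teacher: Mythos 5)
Your proposal takes a genuinely different, ``ground-up'' route than the paper. The paper's proof is a three-line composition of results already in hand: under the finite-families restriction, \cite[Theorem 1]{zh19} gives $P_{\C}(f,Q,Q)=P_{inv}(f,Q,\C)$ for every $f\in C(U,\R)$; setting $\varphi=0$ in Theorem \ref{thm 1.1} shows $P_{inv,\psi}(0,Q,\C)$ is the unique root of $P_{inv}(-\beta\psi,Q,\C)=0$, hence also of $P_{\C}(-\beta\psi,Q,Q)=0$; and Theorem \ref{thm 1.2} identifies that root with ${\rm dim}_{\C}^{BS}(\psi,Q,Q)$. You instead try to match the two dimensional quantities directly, comparing $R_{inv,\psi,T}(-\beta\psi,Q,\C)$ against $R_{\C}(\psi,Q,Q,\beta,N)$. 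This is a valid aspiration, but it amounts to re-deriving the content of \cite[Theorem 1]{zh19} plus Theorems \ref{thm 1.1}--\ref{thm 1.2} rather than invoking them, and the key step where they do the real work is exactly where your sketch hand-waves.

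Concretely, the gap is your ``secondary subtlety.'' The quantity $R_{inv,\psi,T}(-\beta\psi,Q,\C)$ is a \emph{supremum} of an \emph{infinite} sum $\sum_{n\in G_T}\sum_{x\in F_n'}e^{-\beta S_{n\tau}\psi}$, where $F_n'$ ranges over separated sets at \emph{every} scale $n$ in the tail $G_T$ simultaneously. The quantity $R_{\C}(\psi,Q,Q,\beta,N)$ is an \emph{infimum} over a \emph{single} finite cover of $Q$ by cylinders of mixed lengths $\geq N$. These are not dual versions of each other differing ``only by sup vs.\ inf'': one accumulates contributions from all length scales, the other selects one economical cover. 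The observation that a maximal separated set is a spanning set (Proposition \ref{poro 3.4}) relates two formulations at a \emph{fixed} length $n$, and does not bridge a sum over all $n\in G_T$ against an infimum over covers. Showing that these two constructions yield the same critical exponent is precisely what \cite[Theorem 1]{zh19} encodes for the passage from $P_{\C}$ to $P_{inv}$, and what Theorem \ref{thm 1.1.1} together with Theorem \ref{thm 1.1} encodes for the passage from the all-scales tail sums to the single-scale upper capacity pressure. Without invoking those results (or re-proving them in detail), the claimed equivalence ``$\limsup_T R_{inv,\psi,T}(-\beta\psi,Q,\C)<\infty$ iff $R_{\C}(\psi,Q,Q,\beta)<\infty$'' is not established. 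You also correctly flag that the finite-families hypothesis is essential, but you use it only heuristically (``makes the matching exact''); in the paper it enters precisely through \cite[Theorem 1]{zh19}, which your sketch never cites.
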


\begin{proof}
 If
the infimum in Eq. (\ref{equ 3.1})  is taken over all  finite families, then by \cite[Theorem 1]{zh19} $P_{\C}(f,Q,Q)=P_{inv}(f,Q,\C)$  for any $f\in C(U,\R)$. Taking $\varphi =0$ in  Theorem \ref{thm 1.1}, we deduce  that $$P_{inv}(-P_{inv,\psi }(0,Q,\C)\cdot \psi,Q,\mathcal{C})=P_{\C}(-P_{inv,\psi }(0,Q,\C)\cdot \psi,Q,Q)=0.$$ 
By Theorem \ref{thm 1.2},  we get  ${\rm dim}_{\C}^{BS}(\psi,Q,Q)=P_{inv,\psi }(0,Q,\C)$ by the  uniqueness of root of the equation. 
\end{proof}

\subsection{Variational principle for BS invariance dimension}
In this subsection,  we are devoted  to establishing a variational principle for BS invariance dimension.  The proof of Theorem \ref{thm 1.3}  is inspired by  the work of \cite{fh12,wc12,whs19,z20}.

By $M(Q)$ we denote the set of  Borel probability measures on $Q$.
\begin{df}
Let $Q$ be a  controlled  invariant set and  $\mathcal C =(\mathcal A, \tau,\nu)$ be an invariant partition of  $Q$, and let $\varphi\in C(U,\mathbb{R})$ with $\varphi >0$. 
Given $\mu \in M(Q)$, we  define  the measure-theoretic lower  BS invariance  pressure  of  $\mu$ w.r.t. $\varphi$  and $\C$ as
\begin{align*}
\underline{h}_{\mu,inv}(\varphi,Q,\C)&=\int_Q \liminf_{n \to \infty}-\frac{\log \mu(Q_n(x,\C))}{\S}d\mu.
\end{align*} 
\end{df}

\begin{rem}
\begin{enumerate}
\item   Given $n$,  put $$f_n(x):=\log \mu(Q_n(x,\C))$$ and $g_n(x):=\S>0$.  Then    the functions $f_n(x)$ and $g_n(x)$ are  Borel  measurable since they    only take finite many values.  This implies that $\liminf_{n \to \infty}-\frac{f_n(x)}{g_n(x)}$ is  also Borel measurable. 
\item  For  different  cylindrical set $Q_n(x,\C)$, the  weight is dynamically assigned to  a positive  number  $\S$ to detect  more information  of  the   decay rate of  cylindrical sets. Letting $\varphi=1$, it   generalizes   the concept of the measure-theoretic lower   invariance  entropy (cf. \cite[Subsection 4.1]{whs19})  by considering a  fixed  number $n\tau$. 
\end{enumerate}
\end{rem}

The  lower bound of  BS invariance dimension in Theorem \ref{thm 1.3}, i.e,  $$\sup\{\underline{h}_{\mu,inv}(\varphi,Q,\C):\mu \in M(Q), \mu (K)=1\}\leq {\rm dim}_{\C}^{BS}(\varphi,K,Q),$$
 is rather straightforward  by comparing their definitions.  
To obtain the upper bound of  ${\rm dim}_{\C}^{BS}(\varphi,K,Q)$, a  trick  learned from geometric measure theory \cite{m95} is \emph{defining  a  positive, bound linear functional  and then  using  the  Riesz representation theorem to produce a desired measure on $Q$}.

Let $Q$ be a  controlled  invariant set and  $\mathcal C =(\mathcal A, \tau,\nu)$ be an invariant partition of  $Q$,   let $\psi$ be a  bounded  function on $U$ and $\varphi\in C(U,\mathbb{R})$ with $\varphi >0$, $ \lambda\in \R, N\in \N$.  Define 
$$W_{\C}(\varphi,\psi,Q,\lambda,N)=\inf\left\{\sum_{i\in I}\limits c_i e^{-\lambda\M}\right\},$$
where the infimum  is taken over all  finite or countable families\\
$\{(Q_{n_i}(x_i,\C),c_i)\}_{i \in I}$ with  $0<c_i<\infty$, $x_i \in Q$, $n_i \geq N$ such that
$$\sum_{i\in I}\limits c_i \chi_{Q_{n_i}(x_i,\C)}\geq \psi,$$
where $\chi_{A}$ denotes the characteristic function of $A$. 

 The   definition of  $W_{\C}(\varphi,\psi,Q,\lambda,N)$ is highly inspired by the concept of   weighted Hausdorff  dimension  in classical  geometric measure theory \cite{m95} and \cite{fh12,whs19}. The   negative  bounded function $\psi$ is  not only  a counterpart of  the subset $Z$ given in BS invariance dimension by setting $\psi=\chi_Z$, but also   allows us  to   define  a  certain  positive, bounded linear functional, which is critical   for  obtaining a dynamical version of  \emph{Frostman's lemma} by invoking  some functional analysis techniques.

Let $Z\subset Q$ and put $W_{\C}(\varphi,Z,Q,\lambda,N):=W_{\C}(\varphi,\chi_{Z},Q,\lambda,N)$. We define 
$$W_{\C}(\varphi,Z,Q,\lambda)=\lim_{N\to \infty}W_{\C}(\varphi,Z,Q,\lambda,N).$$
Similar to  Proposition \ref{prop 4.5}, there is a critical value of $\lambda$ such  that  $W_{\C}(\varphi,Z,Q,\lambda)$ jumps from $\infty$ to $0$.

\begin{df}
The weighted BS  invariance dimension  on $Z$ w.r.t. $\C$ and $\varphi$ is defined by  
\begin{align*}
{\rm dim}_{\C}^{WBS}(\varphi,Z,Q):&=\inf\{\lambda: W_{\C}(\varphi,Z,Q,\lambda)=0\},\\
&=\sup\{\lambda:W_{\C}(\varphi,Z,Q,\lambda)=\infty\}.
\end{align*} 
\end{df}

\begin{prop}\label{prop 3.12}
Let  $\mathcal C =(\mathcal A, \tau,\nu)$ be an invariant partition of  $Q$ and  $Z\subset Q$ be a non-empty subset, and let $\varphi\in C(U,\mathbb{R})$ with $\varphi >0$.  Then for any $\lambda \geq 0$, $\epsilon >0$ and   sufficiently  large $N$, 
$$R_{\C}(\varphi,Z,Q,\lambda+\epsilon,N)\leq W_{\C}(\varphi,Z,Q,\lambda,N)\leq R_{\C}(\varphi,Z,Q,\lambda,N).$$
Consequently,  ${\rm dim}_{\C}^{BS}(\varphi,Z,Q)={\rm dim}_{\C}^{WBS}(\varphi,Z,Q).$
\end{prop}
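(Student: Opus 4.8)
The plan is to prove the two displayed inequalities for each (large) $N$ and then pass to the limit $N\to\infty$ to get the equality of dimensions. The right-hand inequality $W_{\C}(\varphi,Z,Q,\lambda,N)\leq R_{\C}(\varphi,Z,Q,\lambda,N)$ is the easy one: given any family $\{Q_{n_i}(x_i,\C)\}_{i\in I}$ with $x_i\in Q$, $n_i\geq N$ and $\cup_{i\in I}Q_{n_i}(x_i,\C)\supset Z$, assigning the constant weight $c_i\equiv 1$ produces a family admissible for $W_{\C}(\varphi,Z,Q,\lambda,N)$, because $\sum_{i\in I}\chi_{Q_{n_i}(x_i,\C)}\geq\chi_Z$; hence $W_{\C}(\varphi,Z,Q,\lambda,N)\leq\sum_{i\in I}e^{-\lambda\M}$, and taking the infimum over all such covers finishes it.

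For the left-hand inequality $R_{\C}(\varphi,Z,Q,\lambda+\epsilon,N)\leq W_{\C}(\varphi,Z,Q,\lambda,N)$ --- the substantive part --- I would start from an arbitrary admissible weighted family $\{(Q_{n_i}(x_i,\C),c_i)\}_{i\in I}$ (with $I$ countable) and first put it into a convenient normal form: merge any indices that share both their length and their orbit address (which changes neither the function $\sum_i c_i\chi_{Q_{n_i}(x_i,\C)}$ nor the cost $\sum_i c_i e^{-\lambda\M}$), and then replace each $c_i$ by $\min\{c_i,1\}$ (which keeps the covering inequality $\sum_i c_i\chi_{Q_{n_i}(x_i,\C)}\geq\chi_Z$ valid --- at a point where some relevant weight exceeds $1$ the sum is already $\geq 1$ --- and does not increase the cost). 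Thus one may assume $0<c_i\leq 1$ and that distinct indices carry distinct (length, address) pairs. Next split $I=\bigsqcup_{k\geq 0}I_k$ with $I_k:=\{i\in I:2^{-(k+1)}<c_i\leq 2^{-k}\}$, and, using $\sum_{k\geq 0}\frac{1}{(k+1)(k+2)}=1$, observe that for each $x\in Z$ there is some $k$ with $\sum_{i\in I_k}c_i\chi_{Q_{n_i}(x_i,\C)}(x)\geq\frac{1}{(k+1)(k+2)}$; hence $Z=\bigcup_{k\geq 0}Z_k$, where $Z_k$ is the set of points $x$ for which this holds with index $k$.

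The key structural fact --- the one that lets us avoid the $5r$-covering lemma needed for Bowen balls in \cite{fh12} --- is that, for a fixed $x$, the cylinders of the family containing $x$ are totally ordered by inclusion (the orbit address of length $m$ determines that of every length $n\leq m$), so they have pairwise distinct lengths. Consequently, for $x\in Z_k$, since every weight in $I_k$ is at most $2^{-k}$, there are at least $M_k:=2^k/((k+1)(k+2))$ indices $i\in I_k$ with $x\in Q_{n_i}(x_i,\C)$, hence at least $M_k$ distinct lengths $n_i\geq N$ among them, and I may fix a choice $\iota:Z_k\to I_k$ with $x\in Q_{n_{\iota(x)}}(x_{\iota(x)},\C)$ and $n_{\iota(x)}\geq N+M_k-1$. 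The sets $\{Q_{n_i}(x_i,\C):i\in\iota(Z_k)\}$ cover $Z_k$, so, using $S_{n_i\tau}\varphi(\cdot)\geq n_i\tau m$ with $m:=\min_{u\in U}\varphi(u)>0$ and $c_i>2^{-(k+1)}$ on $I_k$, the key estimate is
\[
R_{\C}(\varphi,Z_k,Q,\lambda+\epsilon,N)\leq\sum_{i\in\iota(Z_k)}e^{-(\lambda+\epsilon)\M}\leq 2^{k+1}e^{-\epsilon(N+M_k-1)\tau m}\sum_{i\in I_k}c_i e^{-\lambda\M}.
\]
Since the union over $k$ of these covers is a cover of $Z$ at level $N$, it follows that $R_{\C}(\varphi,Z,Q,\lambda+\epsilon,N)\leq\big(\sup_{k\geq 0}2^{k+1}e^{-\epsilon(N+M_k-1)\tau m}\big)\sum_{i\in I}c_i e^{-\lambda\M}$. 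Because $M_k$ grows like $2^k/k^2$, the quantity $(k+1)\log 2-\epsilon(N+M_k-1)\tau m$ tends to $-\infty$ as $k\to\infty$, so its supremum over $k\geq 0$ is finite and becomes $\leq 0$ once $N$ exceeds a threshold depending only on $\epsilon,\tau,m$; for such $N$ the parenthesized supremum is $\leq 1$, and taking the infimum over admissible weighted families yields $R_{\C}(\varphi,Z,Q,\lambda+\epsilon,N)\leq W_{\C}(\varphi,Z,Q,\lambda,N)$.

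Finally, letting $N\to\infty$ in the chain of inequalities gives $R_{\C}(\varphi,Z,Q,\lambda+\epsilon)\leq W_{\C}(\varphi,Z,Q,\lambda)\leq R_{\C}(\varphi,Z,Q,\lambda)$ for all $\lambda\geq 0$ and $\epsilon>0$. The right one shows $R_{\C}(\varphi,Z,Q,\lambda)=0$ forces $W_{\C}(\varphi,Z,Q,\lambda)=0$, hence ${\rm dim}_{\C}^{WBS}(\varphi,Z,Q)\leq{\rm dim}_{\C}^{BS}(\varphi,Z,Q)$; the left one shows $W_{\C}(\varphi,Z,Q,\lambda)=0$ forces $R_{\C}(\varphi,Z,Q,\lambda+\epsilon)=0$ for every $\epsilon>0$, hence (as ${\rm dim}_{\C}^{WBS}(\varphi,Z,Q)\geq 0$) ${\rm dim}_{\C}^{BS}(\varphi,Z,Q)\leq{\rm dim}_{\C}^{WBS}(\varphi,Z,Q)$, so the two dimensions coincide. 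I expect the main obstacle to be precisely the left inequality --- extracting from a weighted family a genuine cover of controlled cost --- and within it the use of the nested (tree) structure of cylinders to secure the depth bound $n_{\iota(x)}\geq N+M_k-1$, which is what lets the $\epsilon$-loss absorb the unbounded factor $2^{k+1}$ coming from the dyadic decomposition.
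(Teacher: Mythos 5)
Your proof is correct, and it takes a genuinely different route to the substantive left-hand inequality. The paper slices the weighted family by \emph{depth}: it forms $I_n=\{i:n_i=n\}$, merges so that distinct cylinders of a common depth are disjoint, and introduces the sets $Z_{n,s}$ where the level-$n$ contribution exceeds $s$. Disjointness at a fixed depth forces $c_i>s$ for every index whose cylinder meets $Z_{n,s}$, so the corresponding cover of $Z_{n,s}$ contributes at most $\tfrac{1}{n^2 s}$ times the $\lambda$-cost of layer $I_n$, using the gain $e^{-\epsilon n\tau m}<n^{-2}$ once $n\geq N_0$; replacing $s$ by $s/n^2$ makes the union of the $Z_{n,s/n^2}$ cover $Z$, the sum over $n$ reassembles $\tfrac{1}{s}$ times the total $\lambda$-cost, and $s\to 1$ finishes. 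You instead slice by \emph{weight}, with dyadic layers $I_k=\{i:2^{-(k+1)}<c_i\leq 2^{-k}\}$, and use the covering condition to force, at each $x$, some layer $k$ to contribute at least $1/((k+1)(k+2))$, hence to contain at least $M_k=2^k/((k+1)(k+2))$ cylinders through $x$. The structural input you invoke --- that the cylinders of the family containing a fixed $x$ form a chain under inclusion, hence have pairwise distinct lengths --- converts that count into the depth lower bound $n_{\iota(x)}\geq N+M_k-1$, and the $\epsilon$-gain at that depth then absorbs the factor $2^{k+1}$ once $N$ is large. Both arguments sidestep the $5r$-covering lemma needed in the metric (Bowen-ball) versions of Feng--Huang and Wang--Chen, but they exploit different pieces of the cylinder combinatorics: the paper uses disjointness of same-depth cylinders, you use nestedness of the cylinders through a given point. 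The paper's computation is a little shorter; yours is conceptually closer to the dyadic Frostman argument of the metric case and makes the role of the tree structure explicit.
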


\begin{proof}
It is clear that $W_{\C}(\varphi,Z,Q,\lambda,N)\leq R_{\C}(\varphi,Z,Q,\lambda,N)$ by definitions. Take $N_0\in N$  such  that 
$\frac{n^2}{e^{mn\epsilon\tau}}<1$ and $\sum_{k \geq n}\frac{1}{k^2}<1$ for all $n\geq N_0$,  where $m=\min_{u\in U}\varphi(u)>0$.   Fix $N\geq N_0$. We need to show for any  finite or countable family
$\{(Q_{n_i}(x_i,\C),c_i)\}_{i \in I}$ with  $0<c_i<\infty$, $x_i \in Q$, $n_i \geq N$ satisfying
$\sum_{i\in I}\limits c_i \chi_{Q_{n_i}(x_i,\C)}\geq \chi_{Z},$ one has
$$R_{\C}(\varphi,Z,Q,\lambda+\epsilon,N)\leq \sum_{i\in I}\limits c_i e^{-\lambda\M}.$$

Let $I_n=\{i\in I:n_i=n\}$. Without loss of generality, we  may assume that $Q_{n}(x_i,\C)\cap Q_{n}(x_j,\C)=\emptyset$  for all $i,j\in I_n$ with $i\not=j$. (Otherwise, one can   replace $(Q_{n}(x_i,\C),c_i)$ by $(Q_{n}(x_i,\C),c_i+c_j)$).  Let $s>0$ and  define
\begin{align*}
Z_{n,s}&=\{x\in Z:\sum _{i\in I_n}c_i\chi_{Q_{n}(x_i,\C)}(x)>s\}\\
I_{n,s}&=\{i\in I_n:Q_{n}(x_i,\C)\cap Z_{n,s}\not =\emptyset\}.
\end{align*}Then $Z_{n,s}\subset \cup_{i\in I_{n,s}}Q_{n}(x_i,\C)$. Therefore,
\begin{align*}
R_{\C}(\varphi,Z_{n,s},Q,\lambda+\epsilon,N)
&\leq \sum_{i\in I_{n,s}}e^{-(\lambda+\epsilon)S_{n\tau}\varphi(\omega_{\C_{[0,n)}(x_i)})}\\
&< \sum_{i\in I_{n,s}}\frac{c_i}{s}\cdot e^{-\lambda S_{n\tau}\varphi(\omega_{\C_{[0,n)}(x_i)})}\frac{1}{e^{\epsilon \cdot S_{n\tau}\varphi(\omega_{\C_{[0,n)}(x_i)})}}\\
&< \frac{1}{n^2s}\sum_{i\in I_{n}}c_i e^{-\lambda S_{n\tau}\varphi(\omega_{\C_{[0,n)}(x_i)})}.
\end{align*}
Let $s\in (0,1)$. If $x\in Z$, then we have $ \sum_{n\geq N}\limits \sum_{i\in I_n}\limits c_i\geq1> \sum_{n\geq N}\limits \frac{1}{n^2} s$. So there exists $n\geq N$ such that  $\sum_{i\in I_n}\limits c_i > \frac{1}{n^2} s$. This implies that  $x\in Z_{n,\frac{1}{n^2} s}$ and hence $Z= \cup_{n\geq N} Z_{n,\frac{1}{n^2} s}$. It follows that
\begin{align*}
R_{\C}(\varphi,Z,Q,\lambda+\epsilon,N)
&\leq \sum_{n\geq N}
R_{\C}(\varphi,Z_{n,\frac{1}{n^2} s},Q,\lambda+\epsilon,N)\\
&\leq \frac{1}{s}\sum_{n\geq N}\sum_{i\in I_{n}}c_i e^{-\lambda S_{n\tau}\varphi(\omega_{\C_{[0,n)}(x_i)})}\\
&= \frac{1}{s}\sum_{i\in I}\limits c_i e^{-\lambda\M}.
\end{align*}
We   finish the proof by  letting $s\to 1$.
\end{proof}

The following lemma is an analogue of  BS  Frostman's lemma for weighted BS invariance dimension.

\begin{lem}\label{prop 3.13}
Let   $\mathcal C =(\mathcal A, \tau,\nu)$ be a  clopen invariant partition of  $Q$. Let  $K$ be a non-empty compact subset of $Q$ and $ \lambda \geq0, N\in \mathbb{N}$, $\varphi\in C(U,\mathbb{R})$ with $\varphi >0$.  Suppose that $c:=W_{\C}(\varphi,K,Q,\lambda,N)>0$. Then there exists a  $\mu \in M(Q)$  with $\mu (K)=1$ such that
$$\mu(Q_n(x,\C))\leq \frac{1}{c}e^{-\lambda \S}$$
holds for all $x\in Q$ and $ n\geq N$.
\end{lem}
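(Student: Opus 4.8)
The plan is to follow the classical Frostman-lemma argument from geometric measure theory, adapted to the control-theoretic setting, by constructing a positive bounded linear functional on $C(Q)$ and invoking the Riesz representation theorem. First I would set up the functional. For each continuous $f \in C(Q)$ with $f \geq 0$, define $L(f)$ to be a suitably normalized version of the quantity obtained by looking at coverings $\{(Q_{n_i}(x_i,\C),c_i)\}_{i\in I}$ with $n_i \geq N$ that dominate $f$ in the sense $\sum_i c_i \chi_{Q_{n_i}(x_i,\C)} \geq f$, and taking $\inf \sum_i c_i e^{-\lambda \M}$ over all such coverings; this is precisely $W_{\C}(\varphi, f, Q, \lambda, N)$ extended from characteristic functions to arbitrary nonnegative continuous functions. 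One then extends $L$ to all of $C(Q)$ by $L(f) = L(f^+) - L(f^-)$ (or by working with $f + \|f\|_\infty$). The key structural properties to verify are: $L$ is positively homogeneous, subadditive, and in fact additive on $C(Q)$ --- additivity is where one uses that $\mathcal{A}$ is clopen, so each $Q_n(x,\C)$ is clopen in $Q$, hence both $\chi_{Q_n(x,\C)}$ is continuous and the cylinders at a fixed level $n$ form a finite clopen partition, which lets one split an optimal covering for $f+g$ into coverings for $f$ and for $g$ with controlled total cost. One also needs $L(\chi_Q) = W_{\C}(\varphi,Q,Q,\lambda,N) \geq W_{\C}(\varphi,K,Q,\lambda,N) = c > 0$ and finiteness of $L$, which follows because $\varphi > 0$ bounds $e^{-\lambda\M}$ and there are at most $(\#\mathcal{A})^n$ cylinders of level $n$.

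Once $L$ is a positive bounded linear functional on $C(Q)$, the Riesz representation theorem produces a finite Borel measure $\mu_0$ on $Q$ with $L(f) = \int_Q f\, d\mu_0$ for all $f\in C(Q)$. I would then normalize: set $\mu = \mu_0 / \mu_0(Q)$, so $\mu \in M(Q)$. The two things left to check are the mass condition $\mu(K) = 1$ and the pointwise bound $\mu(Q_n(x,\C)) \leq \frac{1}{c} e^{-\lambda \S}$ for all $x\in Q$, $n\geq N$. For the bound: fix $x$ and $n \geq N$; the single pair $(Q_n(x,\C), 1)$ is an admissible covering of the function $\chi_{Q_n(x,\C)}$, so $\mu_0(Q_n(x,\C)) = L(\chi_{Q_n(x,\C)}) \leq e^{-\lambda \S}$, and since $\mu_0(Q) = L(\chi_Q) \geq c$, dividing gives $\mu(Q_n(x,\C)) \leq \frac{1}{c} e^{-\lambda\S}$. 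For $\mu(K) = 1$: because $K$ is compact and the cylinders are clopen, any open cover of $K$ by cylinders of level $\geq N$ has a finite subcover, and one shows $L(f) = 0$ for any $f \in C(Q)$ supported off $K$ (equivalently $\mu_0(Q\setminus K) = 0$), using that $f$ can be dominated by a covering whose cylinders all miss a neighborhood of $K$ but whose total cost can be made as large as the covering demands $L(f)$ to be --- more precisely one shows $L(f) \leq W_{\C}(\varphi, K, Q, \lambda, N)$ is not the right direction; instead I would argue $\mu_0(Q) = L(\chi_Q)$ and $\mu_0(K) \geq W_{\C}(\varphi,K,Q,\lambda,N)$ combined with $\mu_0(U) = 0$ for open $U$ disjoint from $K$ (using clopen cylinders to build a zero-cost-limit covering of $\chi_U$ in the complement of $K$), forcing $\mu_0$ to be concentrated on $K$.

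The main obstacle I expect is the additivity (and the $\mu_0(Q\setminus K)=0$ step), i.e., showing $L(f+g) = L(f) + L(g)$ rather than merely $\leq$. Subadditivity is immediate from concatenating coverings; the reverse inequality requires decomposing an optimal covering of $f+g$ into separate coverings of $f$ and $g$, and this is exactly where the clopen hypothesis on $\mathcal{C}$ is essential --- it guarantees $Q_n(x,\C)$ is open (so the covers interact well with compactness of $K$) and that at each fixed level the cylinders partition $Q$, allowing a clean splitting with additive costs in the limit $N\to\infty$. I would model this step closely on the corresponding arguments in Feng--Huang \cite{fh12} and Wang--Huang--Sun \cite{whs19}, which handle the analogous decomposition for Bowen balls and for clopen invariant partitions respectively; the control-system modifications are notational, with $e^{-\M}$ playing the role of the dynamical radius $e^{-n_i}$ and $\lambda$ playing the role of the dimension parameter.
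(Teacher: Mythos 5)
Your plan has the right skeleton (build a functional, invoke Riesz), but you have misidentified the key structural step and this makes the proposal break at exactly the point you yourself flag as the ``main obstacle.'' You aim to prove that the weighted-cover functional is \emph{additive} on $C(Q)$ and then apply Riesz directly. That additivity is almost certainly false: the quantity $W_{\C}(\varphi,\cdot,Q,\lambda,N)$ is an infimum over coverings and is therefore subadditive by concatenation, but there is no mechanism to split an optimal covering of $f+g$ into coverings of $f$ and of $g$ whose costs add up to the original --- the clopen hypothesis gives you a partition of $Q$ at each level $n$, but a covering of $f+g$ may use cylinders of many different lengths $n_i$ that overlap across levels, and there is no canonical way to apportion the coefficients $c_i$ between $f$ and $g$. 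The paper does not attempt this. Instead it defines
$$p(f)=\tfrac{1}{c}\,W_{\C}(\varphi,\chi_K\cdot f,Q,\lambda,N),$$
verifies that $p$ is merely \emph{sublinear} (subadditive, positively homogeneous) with $p(\mathbf{1})=1$, $p(f)\leq\|f\|$, and $p(g)=0$ whenever $g\leq 0$, and then applies the \emph{Hahn--Banach theorem} to extend the linear map $c\mapsto cp(\mathbf 1)$ on the constants to a linear functional $L$ with $-p(-f)\leq L(f)\leq p(f)$. Positivity of $L$ then falls out for free because $p(-f)=0$ when $f\geq 0$, and Riesz gives the measure. The Hahn--Banach step is the standard way to bypass exactly the nonlinearity you are worried about, and it is the heart of the Frostman-type argument in Feng--Huang and Wang--Huang--Sun that you cite.

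A second, related gap: you drop the $\chi_K$ factor from the functional and try to recover $\mu(K)=1$ afterwards by normalization, but your own description of that step (``$\mu_0(K)\geq W_{\C}(\varphi,K,Q,\lambda,N)$ combined with $\mu_0(U)=0$ for open $U$ disjoint from $K$\dots forcing $\mu_0$ to be concentrated on $K$'') is circular and you do not supply the mechanism. With the paper's definition, $p(f)=0$ for any $f$ supported off $K$ is immediate since $\chi_K\cdot f\leq 0$ there, so for any compact $E\subset Q\setminus K$, Urysohn's lemma gives a continuous $f$ with $f|_E=1$, $f|_K=0$, $0\leq f\leq1$, and then $\mu(E)\leq\int f\,d\mu=L(f)\leq p(f)=0$; inner regularity finishes $\mu(K)=1$. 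Likewise the pointwise bound uses Urysohn with the fact that $Q_n(x,\C)$ is \emph{open} (this is where clopen is really used) to squeeze $\mu(Q_n(x,\C))$ below $\tfrac{1}{c}e^{-\lambda\S}$; your observation that the pair $(Q_n(x,\C),1)$ dominates $\chi_{Q_n(x,\C)}$ is the right estimate, but you need the Urysohn approximation because $\chi_{Q_n(x,\C)}$ need not be continuous on all of $Q$ a priori (it is here, since the cylinder is clopen, but the approximation argument is the clean route the paper takes). In short: replace ``show additivity'' by ``apply Hahn--Banach to a sublinear $p$,'' and build $\chi_K$ and $1/c$ into the definition of $p$; with those two changes the rest of your outline becomes the paper's proof.
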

\begin{proof}
Clearly,  $c<\infty$. Define  a map $p$ on $C(Q,\mathbb{R})$  given by 
$$p(f)=\frac{1}{c}W_{\C}(\varphi,\chi_K\cdot f,Q,\lambda,N)$$
for any $f\in C(Q,\mathbb{R})$.  It is easy  to check that
\begin{enumerate}
\item $p(f+g)\leq p(f)+p(g)$ for any $f,g\in C(Q,\mathbb{R})$;
\item $p(kf)=kp(f)$ for any $k\geq 0$ and $f\in C(Q,\mathbb{R})$;
\item $p(\textbf{1})=1$, $p(f)\leq ||f||$ for any $f\in C(Q,\mathbb{R})$ and $p(g)=0$ for any $g\in C(Q,\mathbb{R})$ with $g\leq 0$, where $\textbf{1}$ denotes the constant function $\textbf{1}(1)=1$.
\end{enumerate} 

We only  show the inequality: $p(f)\leq ||f||$\footnote{Another simpler proof: $p(f)\leq p(||f||)=||f||\cdot p(1)=||f||$.}.
 Let
$\{(Q_{n_i}(x_i,\C),c_i)\}_{i \in I}$   be  a   finite or countable family with  $0<c_i<\infty$, $x_i \in Q$, $n_i \geq N$ such that
$\sum_{i\in I}\limits c_i \chi_{Q_{n_i}(x_i,\C)}\geq \chi_K$. Then  $\sum_{i\in I}\limits c_i||f|| \chi_{Q_{n_i}(x_i,\C)}\geq \chi_K f$
and hence $$p(f)\leq ||f||\cdot \frac{1}{c} \sum_{i\in I}\limits c_i e^{-\lambda\M}.$$  This shows that $p(f)\leq ||f||$.

By the Hahn-Banach theorem, we can extend the linear function $c\mapsto cp(1)$, $c\in \R$, from  the subspace of the constant functions to a linear functional $L:C(Q,\mathbb{R})\rightarrow \R$ satisfying
$L(1)=p(1)=1$ and $-p(-f)\leq L(f)\leq p(f)$ for any $f\in C(Q,\mathbb{R})$.
Let $f\in C(Q,\mathbb{R})$ with $f\geq0$. Then $p(-f)=0$ and hence $L(f)\geq 0$. By Riesz representation theorem, there exists  a $\mu \in M(Q)$ such that $L(f)=\int_Q f d\mu$ for any $f\in C(Q,\mathbb{R})$.

We show  $\mu(K)=1$.  For any compact  set $E\subset Q\backslash K$, by Urysohn's lemma there is a continuous  function $0\leq f\leq 1$  on $Q$ such that $f|_E=1$ and  $f|_K=0$. Then $\mu(E)\leq\int_Qfd\mu\leq p(f)=0$, which implies that $\mu(K)=1$.

We proceed to show  $\mu(Q_n(x,\C))\leq \frac{1}{c}e^{-\lambda \S}$
holds for all $x\in Q$ and $ n\geq N$. Let $n\geq N$ and   $x\in Q$.  For any compact set $E\subset Q_n(x,\C)$,  noting  that  $Q_n(x,\C)$ is open, the  Urysohn's lemma guarantees that there exists a continuous  function $0\leq f\leq 1$  on $Q$ such that $f|_E=1$ and  $f|_{Q\backslash Q_n(x,\C)}=0$. Then $\mu(E)\leq\int_Qfd\mu\leq p(f)$
and  $W_{\C}(\varphi,\chi_K\cdot f,Q,\lambda,N)\leq e^{-\lambda\S}$ since $\chi_{K}\cdot f\leq\chi_{Q_{n}(x_i,\C)}$. Hence, we get 
$$\mu(Q_n(x,\C))\leq \frac{1}{c}e^{-\lambda \S}.$$
\end{proof}

\begin{rem}
As we have mentioned,  $Q_n(x,\C)$ may not  be an  ``open ball". The assumption  imposed on  the  invariant partition   is to ensure    Urysohn's lemma  can be applied.
\end{rem}

Finally,  we give the proof of the Theorem \ref{thm 1.3}.
\begin{proof}[Proof of Theorem \ref{thm 1.3}]

We show $ \underline{h}_{\mu,inv}(\varphi,Q,\C)\leq {\rm dim}_{\C}^{BS}(\varphi,K,Q)$ for every  $\mu \in M(Q)$ with $\mu(K)=1$. Assume that  $\underline{h}_{\mu,inv}(\varphi,Q,\C)>0$ and let  $0<s<\underline{h}_{\mu,inv}(\varphi,Q,\C)$. Define
$$E=\{x\in Q:\liminf_{n \to \infty}-\frac{\log \mu(Q_n(x,\C))}{\S}>s\}.$$Then $\mu(E)>0$ and hence $\mu(E\cap K)>0$. We define
$$E_N=\{x\in E\cap K:-\frac{\log \mu(Q_n(x,\C))}{\S}> s ~\text{for any }n\geq N\}.$$
Then    $\mu(E_{N_0})>0$ for some $N_0\in \N$. Let $N\geq N_0$, and   let  $\{Q_{n_i}(x_i,\C)\}_{i\in I}$  be  a   finite or countable family  such that $x_i \in Q$,  $n_i \geq N$ and  $\cup_{i\geq I}Q_{n_i}(x_i,\C)\supset E_{N_0}$. We may assume that $Q_{n_i}(x_i,\C) \cap E_{N_0}\not =\emptyset$ for each $i \in I$.  Choose a point $y_i \in  Q_{n_i}(x_i,\C) \cap E_{N_0}$ for each $i \in I$. Then  $Q_{n_i}(x_i,\C)=Q_{n_i}(y_i,\C)$ and hence $\omega_{\C_{[0,n_i)}(x_i)}=\omega_{\C_{[0,n_i)}(y_i)}$.  Therefore, we have
\begin{align*}
\sum_{i\in I}e^{-s\M}&=\sum_{i\in I}e^{-sS_{n_i\tau}\varphi(\omega_{\C_{[0,n_i)}(y_i)})}\\
&\geq\sum_{i\in I}\mu(Q_{n_i}(y_i,\C)
\geq\mu( E_{N_0}).
\end{align*}
It follows that $R_{\C}(\varphi,E_{N_0},Q,s)\geq R_{\C}(\varphi,E_{N_0},Q,s,N)>0.$ This  implies that $${\rm dim}_{\C}^{BS}(\varphi,K,Q)\geq {\rm dim}_{\C}^{BS}(\varphi,E_{N_0},Q)\geq s.$$ Letting   $s\to \underline{h}_{\mu,inv}(\varphi,Q,\C)$, we get ${\rm dim}_{\C}^{BS}(\varphi,K,Q)\geq \underline{h}_{\mu,inv}(\varphi,Q,\C)$.

We continue to  show 
$${\rm dim}_{\C}^{BS}(\varphi,K,Q)\leq \sup\{\underline{h}_{\mu,inv}(\varphi,Q,\C):\mu \in M(Q), \mu (K)=1\}.$$ Let  $0<s< {\rm dim}_{\C}(\varphi,K,Q)$. By Proposition \ref{prop 3.12},  we have $W_{\C}(\varphi,K,Q,s)>0$ and hence there exists  $N$  such  that $c:=W_{\C}(\varphi,K,Q,s,N)>0$. 
By Lemma \ref{prop 3.13},  there exists   a  $\mu \in M(Q)$ such that $\mu (K)=1$ and $$\mu(Q_n(x,\C))\leq \frac{1}{c}e^{-s \S}$$
holds for all $x\in Q $ and $ n\geq N$. Then  $$s \leq \underline{h}_{\mu,inv}(\varphi,Q,\C)\leq  \sup\{\underline{h}_{\mu,inv}(\varphi,Q,\C):\mu \in M(Q), \mu (K)=1\}.$$  Letting $s \to {\rm dim}_{\C}(\varphi,K,Q)$, this completes the proof.
\end{proof}

\begin{rem}
Taking $\varphi =1$, we remark that   Theorem  \ref{thm 1.3} extends the previous variational principle for Bowen  invariance entropy  presented in \cite[Theorem 6.4]{whs19}.
\end{rem}

\section*{Acknowledgement} 

We sincerely thank the  editor 
H\'el\`ene Frankowska   and   the anonymous referees  for  abundant valuable  comments  and  insightful suggestions, which  greatly improved the  quality of the paper.

The first author was  supported by  the China Postdoctoral Science Foundation (No.2024M763856). The second author  was supported by the
National Natural Science Foundation of China (Nos. 12471184 and 12071222). The third  author was supported by Postgraduate Research $\&$ Practice Innovation Program of Jiangsu Province (No. KYCX24$\_$1790). The  fourth  author   was  supported by the
National Natural Science Foundation of China (No. 11971236) and Qinglan project. The work was also funded by the Priority Academic Program Development of Jiangsu Higher Education Institutions.  We would like to express our gratitude to Tianyuan Mathematical Center in Southwest China(No. 11826102), Sichuan University and Southwest Jiaotong University for their support and hospitality.


\end{document}